\newtheorem{thm}{Theorem}[section]
\newtheorem{cor}[thm]{Corollary}
\newtheorem{lem}[thm]{Lemma}
\newtheorem{prop}[thm]{Proposition}
\theoremstyle{definition}
\newtheorem{defn}[thm]{Definition}
\theoremstyle{remark}
\theoremstyle{example}
\newtheorem{exam}[thm]{Example}
\numberwithin{equation}{section}
\begin{document}
\title[Global behavior]{Nonautonomous Riccati difference
equation with real $k$-periodic ($k\geq 1$) coefficients }
\author{ Raouf Azizi}
\address{Raouf Azizi, University of Carthage, Faculty of Sciences of
Bizerte, Department of Mathematics, \thinspace\ 7021, Jarzouna. Tunisia.}
\email{araoufazizi@gmail.com}
\keywords{First order Riccati difference equation,  non-autonomous Riccati difference equation, forbidden
set, periodic solutions, asymptotic stability, dense solutions.}

\begin{abstract}
We study the non-autonomous Riccati difference equation
\[x_{n+1}=\frac{a_nx_n+b_n}{c_nx_n+d_n}, \ n=0,1,2,\cdots\]
where $(a_n)_{n\geq0}, \ (b_n)_{n\geq0}, \ (c_n)_{n\geq0}, \ \text{and} \ (d_n)_{n\geq0}$ are $k$-periodic sequences, $k\geq 1$, with initial value $x_0 \in  \mathbb{R}$.
Precisely we give a detailed analysis of the forbidden set and the character of the solutions.
\end{abstract}

\maketitle

\section{Introduction}

\vspace{-3mm}
Consider the non-autonomous Riccati difference equation
\begin{equation}
x_{n+1}=\frac{a_nx_n+b_n}{c_nx_n+d_n}, \ n=0,1,2,\cdots \label{pre}
\end{equation}
where $(a_n)_{n\geq0}, \ (b_n)_{n\geq0}, \ (c_n)_{n\geq0}, \ (d_n)_{n\geq0}$ are $k$-periodic, $k\geq 1$, and with initial value $x_0 \in  \mathbb{R}$. We assume that
$c_n\neq0 \ \text{and} \ a_nd_n-c_nb_n\neq0, \ \forall \ n\geq0$.\\
\noindent

When $k=1$ the Eq.(\ref{pre}) is reduced to the first
order autonomous Riccati rational equation which has been studied thoroughly (see, e.g.,  \cite{5,6}).
The case $k=2$ is investigated by E.A. Grove, Y. Kostrov, G. Ladas (\cite{1}).
They determined, firstly, the forbidden set of the Eq.(\ref{pre}) or also called the domain of undefinable solutions
(see Definition 2.1), and secondly, the behavior of the solutions of Eq.(\ref{pre}).
In this present paper, we extend the study of the Eq.(\ref{pre}) for any integer $k\geq1$
by determining its forbidden set and the
asymptotic behavior of its solutions. This also improves results in \cite{1, 8}.
For recent progress on the qualitative property on non-autonomous difference equations, see (e.g. , \cite{2,9,8,10,1,5,11}).
\vspace{0.1mm}\\
\noindent

This paper is organized as follows: In Section 2, we recall some definitions and we give some preliminary results
on non-autonomous Riccati difference equation. In Section 3, we study the existence of solutions of
Eq.(\ref{pre}) and their asymptotic behavior. Section 4 is devoted to detailed study of Eq.(\ref{pre}) when $b_n=0$ for
any integer $n$. In Section 5, we study the special case $b_n=0, a_n=d_n=1$, for all $n\geq0$, with $(c_n)_{n\geq0}$
not necessarily periodic. In the Appendix, we give a complete study of Eq.(\ref{pre}) when $k=1$, (using the matrix approach adopted in subsection 2.2).

\section{Preliminaries}

\subsection{ Basic concepts }

For the sake of self-containment and convenience, we recall the following definitions.\\
A \textit{first order non-autonomous difference equation} is an equation of the form
$$ (E) \ x_{n+1}=f_n(x_n), \ n=0,1,\ldots$$
 where $\Omega$ is a subset of $\mathbb{R}$ (usually $\Omega$ is an interval or a union of intervals of $\mathbb{R}$),
 and for any integer $n$,  $f_n : \Omega\rightarrow \mathbb{R}$ is a continuous function.\\
If $f_n=f, \ \forall n \in \mathbb{N}$, the Eq.($E$) is reduced to \textit{first order autonomous difference equation}:
$$x_{n+1}=f(x_n), \ n=0,1,\ldots$$

 A \textit{solution} of Eq.($E$) is a sequence $(x_n)_{n\geq0}$ such that for every integer $n$,
 $x_n \in \Omega$ and $x_{n+1}=f_n(x_n)$.

\begin{defn}
The set of initial values $x_0 \in  \mathbb{R}$ through which the Eq.($E$) is not well defined for all $n\geq0$ is called the
\textit{forbidden set} of this equation, usually denoted by $\mathcal{F}$.
Hence the solution $(x_n)_{n\geq0}$ of Eq.($E$) exists if and only if $x_0 \not\in \mathcal{F}$.
\end{defn}
\begin{defn}
A solution $(x_n)_{n\geq0}$ of Eq.($E$) is called:
\begin{description}
\item[(i)] \textit{periodic} with period $p$ if
$$x_{n+p}=x_n, \ \text{for every} \ n\geq0 \ (*)$$
\item[(ii)] periodic with \textit{prime period} $p$ if it is periodic with period $p$, where $p$ is the least integer
for which $(*)$ holds. In this case, the a \textit{p-tuple} $\{x_n, \ x_{n+1},\ldots,x_{n+p-1}\}$ is called a \textit{p-cycle} of Eq.($E$).
\end{description}
\end{defn}
\begin{defn}
A point $\bar{x}$ is an \textit{equilibrium point} of Eq.($E$) if $f_n(\bar{x})=\bar{x}, \ \text{for all} \ n\geq0$ that is, $x_n=\bar{x}$ for all $n\geq0$ is a constant solution of Eq.($E$).\\
The equilibrium point is said to be:
\begin{description}
\item[(i)] \textit{locally stable} if for every $\epsilon$, there exists $\alpha>0$ such that if $(x_n)_{n\geq0}$ is a
solution of Eq.($E$) with initial value $x_0 \in \Omega\backslash\mathcal{F}$ such that $\mid x_0-\bar{x}\mid<\alpha$ we have
$ \mid x_n-\bar{x}\mid<\epsilon$ for all $n\geq0$.
\item[(ii)] a \textit{global attractor} relative to the set $\mathcal{B}\subset\Omega\backslash\mathcal{F}$
(basin of attraction) if for all solution $(x_n)_{n\geq0}$ of Eq.($E$) with initial value $x_0 \in  \mathcal{B}$,
we have $\displaystyle{\lim_{n\rightarrow+\infty}x_n=\bar{x}}$.
\item[(iii)] a \textit{globally asymptotically stable} relative to the set $\mathcal{B}$ if it is locally stable
and global attractor relative to the set $\mathcal{B}$.
\item[(iv)] \textit{unstable} if it is not stable.
\end{description}
\end{defn}

\subsection{Some results on non-autonomous Riccati difference equation}
\noindent
In this subsection, we consider the Eq.(\ref{pre}) where the coefficients $a_n, \ b_n, \ c_n, \ d_n$ are not
necessarily periodic.
Set
$$A_n=\left(\begin{array}{cc}
a_n & b_n \\
c_n & d_n \\
\end{array}\right) \ \ \text{and} \ \ f_{A_n}(x)=\displaystyle{\frac{a_nx+b_n}{c_nx+d_n}}$$
Then the solution $(x_n)_{n\geq0}$ of Eq.(\ref{pre}) can be expressed as follow:
$$x_n=f_{A_{n-1}}f_{A_{n-2}}\cdots f_{A_0}(x_0), \ n=1,2,\ldots$$
Here $f_{A_{n-1}}f_{A_{n-2}}\cdots f_{A_0}:=f_{A_{n-1}}\circ f_{A_{n-2}}\circ\cdots \circ f_{A_0}$.
\medskip
\begin{prop}\rm{
\noindent
\begin{enumerate}[leftmargin=*]
\item For every invertible matrices $A,B$ we have: $f_A=f_B$ if and only if there exists $\alpha\in\mathbb{R}$ such that $A=\alpha B$.
\item For every $A,B\in\mathcal{M}_2(\mathbb{R})$, $x\in\mathbb{R}$ such that $f_Af_B(x)$ exists we have: $f_{AB}(x)$ exists, and $f_Af_B(x)=f_{AB}(x)$.
\item For every $n\geq0$, $A_0,A_1,\ldots A_n\in\mathcal{M}_2(\mathbb{R})$ and $x\in\mathbb{R}$ such that\\ $f_{A_n}f_{A_{n-1}}\cdots f_{A_0}(x_0)$
  exists, we have: $f_{A_nA_{n-1}\cdots A_0}(x)$ exits, and\\ $f_{A_n}f_{A_{n-1}}\cdots f_{A_0}(x_0)=f_{A_nA_{n-1}\cdots A_0}(x)$. Particularly, if $A_n=A$, for all $n \in \mathbb{N}$, then $(f_A)^n(x)=f_{A^n}(x)$.
\end{enumerate}}
\end{prop}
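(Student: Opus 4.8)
The three statements are linked: (3) follows from (2) by an easy induction on $n$, and the "particularly" clause is the case $A_0=\cdots=A_n=A$. So the plan is to prove (1) and (2) carefully, then assemble (3).

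For part (1), the "if" direction is immediate: if $A=\alpha B$ with $\alpha\neq 0$ (which must hold since $A$ is invertible), then each entry of $A$ is $\alpha$ times the corresponding entry of $B$, and the factor $\alpha$ cancels in the ratio $\tfrac{a x + b}{c x + d}$, so $f_A=f_B$ wherever both are defined. For the "only if" direction, suppose $f_A = f_B$ as functions. Writing $A=\begin{pmatrix} a & b\\ c& d\end{pmatrix}$, $B=\begin{pmatrix} a'& b'\\ c'& d'\end{pmatrix}$, the equation $f_A(x)=f_B(x)$ for all admissible $x$ gives $(ax+b)(c'x+d') = (a'x+b')(cx+d)$ as a polynomial identity in $x$; comparing coefficients of $x^2$, $x^1$, $x^0$ yields $ac' = a'c$, $ad'+bc' = a'd+b'c$, and $bd' = b'd$. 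From these one extracts a common scalar $\alpha$ with $(a,b,c,d) = \alpha(a',b',c',d')$; the cleanest way is to first treat the case $c'\neq 0$ (set $\alpha = c/c'$ and check the other three entries using invertibility of $B$ to rule out degeneracies), then the case $c'=0$ separately. A slicker alternative, which I would mention, is to note that $f_A$ is a Möbius transformation of $\widehat{\mathbb{R}}$ (or $\widehat{\mathbb{C}}$) and that the map $A\mapsto f_A$ is a group homomorphism $GL_2 \to \mathrm{PGL}_2$ whose kernel is exactly the scalar matrices — but since the paper works over $\mathbb{R}$ with partially-defined maps, the coefficient-comparison argument is more self-contained and I would use that.

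For part (2), let $A = \begin{pmatrix} a& b\\ c& d\end{pmatrix}$, $B=\begin{pmatrix} a'& b'\\ c'& d'\end{pmatrix}$, fix $x$ with $f_B(x)$ defined (i.e.\ $c'x+d'\neq 0$) and $f_A(f_B(x))$ defined (i.e.\ $c\,f_B(x) + d \neq 0$). Compute directly:
\[
f_A(f_B(x)) = \frac{a\cdot\frac{a'x+b'}{c'x+d'} + b}{c\cdot\frac{a'x+b'}{c'x+d'} + d} = \frac{a(a'x+b') + b(c'x+d')}{c(a'x+b') + d(c'x+d')} = \frac{(aa'+bc')x + (ab'+bd')}{(ca'+dc')x + (cb'+dd')},
\]
and the numerator–denominator pair on the right is precisely the first row / second row of $AB$ applied to $x$, i.e.\ $f_{AB}(x)$. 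The step that needs a word of care is why $f_{AB}(x)$ is \emph{defined}: one must check that the denominator $(ca'+dc')x + (cb'+dd')$ is nonzero. This equals $(c'x+d')\cdot\big(c\,f_B(x)+d\big)$, and both factors are nonzero by the standing assumptions that $f_B(x)$ and $f_A(f_B(x))$ exist — so the clearing-denominators manipulation above is legitimate and $f_{AB}(x)$ exists. (No periodicity and no invertibility of $A,B$ is needed here, only the two existence hypotheses, which matches the statement.)

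For part (3), induct on $n$. The base case $n=0$ is trivial. For the inductive step, assume $f_{A_{n-1}}\cdots f_{A_0}(x) = f_{A_{n-1}\cdots A_0}(x)$ whenever the left side exists; given that $f_{A_n} f_{A_{n-1}}\cdots f_{A_0}(x)$ exists, set $y = f_{A_{n-1}}\cdots f_{A_0}(x)$, which exists and by hypothesis equals $f_{A_{n-1}\cdots A_0}(x)$. Then $f_{A_n}(y)$ exists, so by part (2) applied to $A=A_n$ and $B = A_{n-1}\cdots A_0$ we get $f_{A_n}(y) = f_{A_n}f_{A_{n-1}\cdots A_0}(x) = f_{A_n (A_{n-1}\cdots A_0)}(x) = f_{A_n\cdots A_0}(x)$, completing the induction. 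The "particularly" clause is the specialization $A_i = A$ for all $i$, giving $(f_A)^n = f_{A^n}$. I expect the main (minor) obstacle to be bookkeeping the existence conditions in part (2) — making sure the denominator of $f_{AB}(x)$ is genuinely nonzero rather than silently assuming it — but this is handled by the factorization $(c'x+d')(c f_B(x)+d)$ noted above.
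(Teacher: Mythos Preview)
The paper states this proposition without proof, treating it as an elementary fact about M\"obius maps; there is therefore no argument in the paper to compare against. Your proof is correct and would fill the gap cleanly. The computation in (2) is the standard one, and your observation that the denominator of $f_{AB}(x)$ factors as $(c'x+d')\bigl(c\,f_B(x)+d\bigr)$ is exactly what is needed to justify that $f_{AB}(x)$ is defined, not merely formally equal. The induction for (3) is straightforward once (2) is in hand.

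For (1), two small remarks. First, the equality $f_A=f_B$ is an equality of functions each defined only on a cofinite subset of $\mathbb{R}$; you should say explicitly that the polynomial identity $(ax+b)(c'x+d')=(a'x+b')(cx+d)$ holds on a cofinite (hence infinite) set and therefore identically. Second, your case split on $c'\neq0$ versus $c'=0$ is the right move, but note that the relations $ac'=a'c$ together with invertibility already force $c=0\iff c'=0$ (if $c=0$ and $c'\neq0$ then $a=0$, making $A$ singular), which streamlines the case analysis. With those one-line additions the argument is complete.
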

\medskip

Denote by $$\bar{A}_n=\prod\limits_{i=0}^{n-1}A_{n-1-i}, \ n=0,1,\ldots$$ with $\bar{A}_0:=A_{-1}A_0=I$.\\
Write
$$\bar{A}_n=\left(
                   \begin{array}{cc}
                     \bar{a}_n & \bar{b}_n\\
                     \bar{c}_n & \bar{d}_n \\
                   \end{array}
                 \right)\hspace{2cm}$$
\vspace{1mm}

Using Proposition 2.4, we can give the general form of solutions of Eq.(\ref{pre}) and thus the associated forbidden set.
Then we have the following corollary:
\medskip
\begin{cor}\rm{
\noindent
\begin{enumerate}[leftmargin=*]
\item The forbidden set of Eq.(\ref{re}) is given by:
$$\mathcal{F}=\left\{-\frac{\bar{d}_n}{\bar{c}_n}; \ n\in\mathbb{N}:\  \bar{c}_n\neq0\right\}$$
\item The solution of Eq.(\ref{pre}) with initial value $x \in \mathbb{R}\backslash\mathcal{F}$ is given by:
$$x_n=f_{\bar{A}_n}(x)=\frac{\bar{a}_nx+\bar{b}_n}{\bar{c}_nx+\bar{d}_n}, \ n=0,1,\ldots$$
\end{enumerate}
}\end{cor}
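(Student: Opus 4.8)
The plan is to derive both assertions directly from Proposition 2.4 together with the recursive structure of the matrices $\bar{A}_n$. First I would unwind the definition $\bar{A}_n = A_{n-1}A_{n-2}\cdots A_0$ and note the one-step recursion $\bar{A}_{n+1} = A_n \bar{A}_n$, which mirrors the recursion $x_{n+1} = f_{A_n}(x_n)$ for solutions of Eq.\eqref{pre}. The key identity is part (3) of Proposition 2.4: whenever the composition $f_{A_{n-1}}f_{A_{n-2}}\cdots f_{A_0}(x)$ is defined, it equals $f_{\bar{A}_n}(x) = \dfrac{\bar{a}_n x + \bar{b}_n}{\bar{c}_n x + \bar{d}_n}$. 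This immediately gives part (2) of the corollary, on the set of $x$ for which all these compositions make sense; I would then identify that set precisely as $\mathbb{R}\setminus\mathcal{F}$.

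For part (1), I would argue by a double inclusion. On one hand, suppose $x = -\bar{d}_n/\bar{c}_n$ for some $n$ with $\bar{c}_n \neq 0$; then the candidate expression $f_{\bar{A}_n}(x)$ has vanishing denominator, so by Proposition 2.4(3) the composition defining $x_n$ cannot be defined, hence $x \in \mathcal{F}$. (One must check here that $\bar{A}_n$ is genuinely noninvertible-free — i.e.\ $\bar{a}_n x + \bar{b}_n$ and $\bar{c}_n x + \bar{d}_n$ are not simultaneously zero — which follows since each $A_i$ is invertible by the hypothesis $a_id_i - c_ib_i \neq 0$, so $\det \bar{A}_n \neq 0$.) Conversely, suppose $x \notin \left\{-\bar{d}_n/\bar{c}_n : \bar{c}_n \neq 0\right\}$. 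I would show by induction on $n$ that $x_n = f_{\bar{A}_n}(x)$ is well defined: the base case $n=0$ is $x_0 = x$ since $\bar{A}_0 = I$; for the inductive step, $x_{n+1} = f_{A_n}(x_n) = f_{A_n}f_{\bar{A}_n}(x)$, and since $c_n x_n + d_n \neq 0$ would need separate justification, it is cleaner to invoke Proposition 2.4(2)–(3) in the form: $f_{A_n}f_{\bar{A}_n}(x)$ is defined as soon as $f_{\bar{A}_{n+1}}(x) = f_{A_n\bar{A}_n}(x)$ is, i.e.\ as soon as $\bar{c}_{n+1}x + \bar{d}_{n+1}\neq 0$, which holds by our assumption on $x$. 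Thus $x \notin \mathcal{F}$, completing the equality.

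The main obstacle is the bookkeeping in the contrapositive direction: Proposition 2.4(2)–(3) asserts that $f_{AB}(x)$ exists \emph{whenever} $f_A f_B(x)$ exists, but not conversely in general (cancellation can occur in the product $AB$). So to conclude that $x\notin\mathcal{F}$ from the non-vanishing of all the $\bar{c}_n x + \bar{d}_n$, one cannot simply cite the proposition; one needs the stronger observation that for the \emph{specific} chain $\bar{A}_{n+1} = A_n\bar{A}_n$ arising here, each intermediate composition $f_{A_n}f_{\bar{A}_n}(x)$ is in fact defined precisely when $f_{\bar{A}_{n+1}}(x)$ is — which is where invertibility of every $A_n$ is essential, since an invertible $A_n$ induces a bijection $f_{A_n}$ of $\mathbb{R}\cup\{\infty\}$ and hence contributes no spurious cancellation. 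I would spell this out as a short lemma or remark before the main induction. The remaining steps — the algebra of fractional linear maps and the induction itself — are routine.
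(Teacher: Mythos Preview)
The paper does not give a proof of this corollary; it is stated as an immediate consequence of Proposition~2.4. Your proposal is the natural way to fill in the details, and it is correct. In particular, you rightly isolate the one genuinely nontrivial point: Proposition~2.4(3) only guarantees that existence of the composition $f_{A_{n-1}}\cdots f_{A_0}(x)$ implies existence of $f_{\bar A_n}(x)$, not the converse, so the inclusion $\mathcal{F}\subset\{-\bar d_n/\bar c_n:\bar c_n\neq0\}$ requires extra work. Your fix via invertibility (viewing each $f_{A_n}$ as a bijection of $\mathbb{R}\cup\{\infty\}$) is valid.

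A slightly more concrete alternative for the inductive step, which you may prefer to the projective-line remark, is the direct computation: if $x_n=f_{\bar A_n}(x)$ is defined, then from $\bar A_{n+1}=A_n\bar A_n$ one reads off
\[
c_n x_n+d_n=\frac{c_n(\bar a_n x+\bar b_n)+d_n(\bar c_n x+\bar d_n)}{\bar c_n x+\bar d_n}=\frac{\bar c_{n+1}x+\bar d_{n+1}}{\bar c_n x+\bar d_n},
\]
which is nonzero by hypothesis on $x$; hence $x_{n+1}=f_{A_n}(x_n)$ is defined. This makes the induction entirely elementary and avoids invoking a lemma you have not yet stated. Either route is fine and both are in the spirit of what the paper leaves implicit.
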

\medskip
In the sequel, we use the convention of notations: $$\prod\limits_{i=p}^q\xi_i=1, \ \sum\limits_{i=p}^q\xi_i=0, \ \text{if} \ p>q$$

There are two specific cases that we can calculate the coefficients $\bar{a}_n, \ \bar{b}_n, \ \bar{c}_n, \ \bar{d}_n$.
The case $A_n=A, \ n\geq0$, corresponds to $\bar{A}_n=A^n$ (see Lemma A.2), and
the case ($b_n=0 \ \text{and} \ d_n=1, \ n\geq0$) (see proposition below).

\medskip
\begin{prop}\rm{
Assume that $b_n=0, \ d_n=1, \ n\geq0$. Then the coefficients $\bar{a}_n, \ \bar{b}_n, \ \bar{c}_n, \ \bar{d}_n$ are given by:
$$\bar{a}_n=\prod_{i=0}^{n-1} a_i, \ \ \bar{b}_n=0, \ \ \bar{c}_n=\sum_{i=0}^{n-1}c_i\prod_{j=0}^{i-1}a_j, \ \ \bar{d}_n=1$$
Particularly, if $a_n=1, \ n\geq0$, then:
$$\bar{a}_n=\bar{d}_n=1, \ \ \bar{c}_n=\sum_{i=0}^{n-1}c_i, \ \ \bar{b}_n=0$$
}\end{prop}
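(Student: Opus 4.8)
The plan is to prove all four formulas simultaneously by induction on $n$, using the recursion $\bar A_{n+1}=A_n\bar A_n$, which is immediate from the definition $\bar A_n=A_{n-1}A_{n-2}\cdots A_0$ (with $\bar A_0=I$). Under the standing hypothesis $b_n=0$, $d_n=1$, every factor is lower triangular,
\[
A_n=\begin{pmatrix} a_n & 0 \\ c_n & 1 \end{pmatrix},
\]
and products of such matrices stay lower triangular with a $1$ in the bottom-right entry. So the assertions $\bar b_n=0$ and $\bar d_n=1$ for all $n$ are essentially structural, and the real content is to identify the two scalar sequences $\bar a_n$ and $\bar c_n$.

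For the base case $n=0$ one has $\bar A_0=I$, and the claimed formulas read $\bar a_0=\prod_{i=0}^{-1}a_i=1$ and $\bar c_0=\sum_{i=0}^{-1}c_i\prod_{j=0}^{i-1}a_j=0$ by the empty-product / empty-sum convention adopted above, which indeed matches $I$. For the inductive step, assuming the formulas hold at $n$ (in particular $\bar b_n=0$, $\bar d_n=1$), I would simply multiply out
\[
\bar A_{n+1}=A_n\bar A_n=\begin{pmatrix} a_n & 0 \\ c_n & 1 \end{pmatrix}\begin{pmatrix} \bar a_n & 0 \\ \bar c_n & 1 \end{pmatrix}=\begin{pmatrix} a_n\bar a_n & 0 \\ c_n\bar a_n+\bar c_n & 1 \end{pmatrix},
\]
which gives $\bar b_{n+1}=0$, $\bar d_{n+1}=1$, $\bar a_{n+1}=a_n\bar a_n$, and $\bar c_{n+1}=c_n\bar a_n+\bar c_n$. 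Substituting the induction hypothesis yields $\bar a_{n+1}=a_n\prod_{i=0}^{n-1}a_i=\prod_{i=0}^{n}a_i$ and $\bar c_{n+1}=c_n\prod_{j=0}^{n-1}a_j+\sum_{i=0}^{n-1}c_i\prod_{j=0}^{i-1}a_j=\sum_{i=0}^{n}c_i\prod_{j=0}^{i-1}a_j$, which is precisely the asserted formula at $n+1$, completing the induction.

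The special case $a_n=1$ for all $n$ then follows by direct substitution: $\prod_{i=0}^{n-1}a_i=1$ forces $\bar a_n=\bar d_n=1$, and $\prod_{j=0}^{i-1}a_j=1$ collapses the sum to $\bar c_n=\sum_{i=0}^{n-1}c_i$, with $\bar b_n=0$ unchanged. I do not anticipate any genuine obstacle here; the only points needing care are bookkeeping the empty product and empty sum at $n=0$, and respecting that the recursion multiplies the new factor $A_n$ on the \emph{left} of $\bar A_n$ (so that the products are ordered correctly) — which is exactly why the index conventions were fixed in advance.
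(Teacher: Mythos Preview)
Your proof is correct. The paper actually states this proposition without proof, so there is nothing to compare against beyond noting that your induction on the recursion $\bar A_{n+1}=A_n\bar A_n$ is the natural argument (and is exactly the method the paper later alludes to, ``by induction and a basic matrix calculation,'' when stating the closely related Lemma~4.1).
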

\medskip
Using Proposition 2.6, we deduce the result of S. Stevi\'c on forbidden set (\cite{7}).
\medskip
\begin{cor}\rm{(\cite{7}, Proposition 1)
Assume that $b_n=0, \ d_n=1, \ n\geq0$. Then the forbidden set of Eq.(\ref{pre}) is given by:
$$\mathcal{F}=\left\{-\left(\sum_{i=0}^{n-1}c_i\prod_{j=0}^{i-1}a_j\right)^{-1}; \ n\geq1: \  \sum_{i=0}^{n-1}c_i\prod_{j=0}^{i-1}a_j\neq0\right\}$$
}\end{cor}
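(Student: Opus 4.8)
The plan is to derive the forbidden set formula as an immediate consequence of the two preceding results: Proposition 2.6, which gives the explicit form of the coefficients $\bar a_n,\bar b_n,\bar c_n,\bar d_n$ under the hypotheses $b_n=0$, $d_n=1$, and part (1) of Corollary 2.5, which expresses $\mathcal F$ in terms of these coefficients. First I would invoke Corollary 2.5(1), which states that $\mathcal F=\{-\bar d_n/\bar c_n;\ n\in\mathbb N,\ \bar c_n\neq0\}$. Then I would substitute the values supplied by Proposition 2.6, namely $\bar d_n=1$ and $\bar c_n=\sum_{i=0}^{n-1}c_i\prod_{j=0}^{i-1}a_j$, directly into this expression, obtaining
\[
\mathcal F=\left\{-\left(\sum_{i=0}^{n-1}c_i\prod_{j=0}^{i-1}a_j\right)^{-1};\ n\in\mathbb N:\ \sum_{i=0}^{n-1}c_i\prod_{j=0}^{i-1}a_j\neq0\right\}.
\]

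The only point requiring a word of care is the index range: in Corollary 2.5(1) the index runs over all $n\in\mathbb N$, but for $n=0$ the convention $\prod_{i=p}^q\xi_i=1$, $\sum_{i=p}^q\xi_i=0$ when $p>q$ forces $\bar c_0=0$ (indeed $\bar A_0=I$), so the term $n=0$ contributes nothing to $\mathcal F$; hence the union may equivalently be taken over $n\geq1$, which is the form stated in the corollary. I would mention this reindexing explicitly so the reader sees why the displayed set starts at $n=1$.

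There is essentially no obstacle here: the statement is a direct specialization, and the entire proof consists of citing Corollary 2.5(1), plugging in the formulas of Proposition 2.6, and noting the trivial $n=0$ term drops out. If one wished to make the argument self-contained one could alternatively re-derive $\bar c_n$ by induction on $n$ from the recursion $\bar A_{n+1}=A_n\bar A_n$ — giving $\bar c_{n+1}=c_n\bar a_n+d_n\bar c_n=c_n\prod_{j=0}^{n-1}a_j+\bar c_n$ together with $\bar a_{n+1}=a_n\bar a_n$ — but this merely reproves Proposition 2.6, so I would not do so and would instead simply quote it.
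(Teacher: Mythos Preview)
Your proposal is correct and matches the paper's approach exactly: the paper simply states that the result is deduced from Proposition 2.6 (together with Corollary 2.5), which is precisely the substitution you describe. Your additional remark about the $n=0$ term dropping out is a welcome clarification that the paper leaves implicit.
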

\medskip

\medskip

\section{\bf The non-autonomous Riccati difference equation with periodic coefficients}
\medskip
In this section, we determine the forbidden set and the asymptotic behavior of solutions of Eq.(\ref{pre}) and their asymptotic behavior when the coefficients $(a_n)_{n\geq0}, \ (b_n)_{n\geq0}, \ (c_n)_{n\geq0}, \ (d_n)_{n\geq0}$ are k-periodic, $k\geq2$.

\subsection{\bf The forbidden set} \ \vspace{-3mm}\\

Unlike the case $k = 1$, the calculation of the forbidden set is more complicated if $k\geq2$. However, using the matrix approach adopted in subsection 2.2, we can explicitly determine the forbidden set for $k\geq2$.\\
\vspace{-3mm}
\\
Let
$A_n=\left(\begin{array}{cc}
          a_n & b_n \\
          c_n & d_n \\
        \end{array}\right)$ \ be a sequence of matrices associated to the Eq.(\ref{pre}).\\
Set $$B_i=A_{i-1}\cdots A_0A_{k-1}\cdots A_{i}, \ \ 0\leq i\leq k-1$$
\vspace{-3mm}
\\
For any invertible matrix $A$, we denote by $\mathcal{F}_A$ the forbidden set associated to the equation: $x_{n+1}=f_A(x_n)$.
The following result improves and generalizes those of \cite{1}.\vspace{-3mm}\\
\begin{thm}\rm{
The forbidden set of Eq.(\ref{pre}) is given by:
$$\mathcal{F}=\left\{\prod_{j=0}^{i-1}f_{A_j}^{-1}\left(-\frac{d_i}{c_i}\right), \  0\leq i\leq k-2\right\}\bigcup\bigcup\limits_{0\leq i\leq k-1}\prod_{j=0}^{i-1}f_{A_j}^{-1}\left(\mathcal{F}_{B_i}\right)$$
}\end{thm}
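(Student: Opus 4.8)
The plan is to unwind the orbit of Eq.~(\ref{pre}) by looking separately at what can go wrong during the first $k-1$ steps and what can go wrong once the dynamics becomes ``genuinely'' $k$-periodic. Recall that $x_{n+1}=f_{A_n}(x_n)$ and that the only obstruction at step $n$ is hitting the pole of $f_{A_n}$, namely $x_n=-d_n/c_n$. So $x_0$ belongs to the forbidden set precisely when, for some $m\ge 0$, the iterate $f_{A_{m-1}}\cdots f_{A_0}(x_0)$ either fails to exist (an earlier pole was hit) or equals $-d_m/c_m$. First I would split the index $m$ into its residue and quotient modulo $k$: writing $m = qk + i$ with $0\le i\le k-1$, the point reached after $qk$ steps (when it exists) is $f_{B_i'}^{\,q}\big(f_{A_{i-1}}\cdots f_{A_0}(x_0)\big)$ for the appropriate cyclic product of the $A_j$'s, and the condition ``$x_m$ hits the pole of $f_{A_i}$'' or ``$x_{m+1}$ fails to exist'' translates into a condition on $y:=f_{A_{i-1}}\cdots f_{A_0}(x_0)$.

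The key algebraic step is to identify that translated condition with membership in $\mathcal{F}_{B_i}$, where $B_i=A_{i-1}\cdots A_0A_{k-1}\cdots A_i$. The point is that after reaching $y$ at time $i$, the subsequent dynamics is conjugate to iteration of $f_{A_{i+k-1}}\cdots f_{A_i}=f_{A_{i-1}\cdots A_0 A_{k-1}\cdots A_i}=f_{B_i}$ up to the final partial block; more precisely, using Proposition~2.4(2)--(3) one checks that $y$ eventually causes a breakdown of the periodic iteration if and only if $y\in\mathcal{F}_{B_i}$ for the appropriate $i$ — this is where the cyclic conjugation $B_i = (A_{i-1}\cdots A_0)\,(A_{k-1}\cdots A_i)$ does the work, since conjugating the ``standard'' block $A_{k-1}\cdots A_0$ by $A_{i-1}\cdots A_0$ rotates the starting index to $i$. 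Pulling this back through the finitely many initial maps $f_{A_0},\dots,f_{A_{i-1}}$ via their (set-valued) inverses gives the second family $\prod_{j=0}^{i-1}f_{A_j}^{-1}(\mathcal{F}_{B_i})$, ranging over $0\le i\le k-1$. Separately, the ``short'' breakdowns — hitting a pole at some time $i$ with $0\le i\le k-2$ before even one full period has elapsed — contribute exactly the finite set $\big\{\prod_{j=0}^{i-1}f_{A_j}^{-1}(-d_i/c_i):0\le i\le k-2\big\}$ (the case $i=k-1$ is absorbed into $\mathcal{F}_{B_{k-1}}$, or equivalently $\mathcal{F}_{B_0}$, since hitting the pole of $f_{A_{k-1}}$ is the first way the block map $f_{B_0}$ is undefined).

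I expect the main obstacle to be bookkeeping rather than any deep idea: one must be careful that $f_{A_j}^{-1}$ is a multivalued (affine) map, so $\prod_{j=0}^{i-1}f_{A_j}^{-1}(S)$ means the full preimage of $S$ under $f_{A_{i-1}}\cdots f_{A_0}$, and one must verify that taking preimages commutes appropriately with unions and does not spuriously enlarge or shrink the set — in particular that a point whose forward orbit is undefined at an \emph{early} step is already accounted for, so there is no double counting or omission. A second delicate point is the precise matching of indices in the conjugation $B_i=A_{i-1}\cdots A_0A_{k-1}\cdots A_i$ versus the orbit segment actually traversed after time $i$ (which runs $A_i,A_{i+1},\dots$), so I would prove the identity $f_{A_{i-1}}\cdots f_{A_0}\circ\big(\text{one period starting at }i\big) = f_{B_i}\circ f_{A_{i-1}}\cdots f_{A_0}$ as an intertwining relation first, then iterate it. Once that intertwining and the elementary description of $\mathcal{F}$ via poles are in hand, the stated formula for $\mathcal{F}$ follows by collecting the two families of bad initial values.
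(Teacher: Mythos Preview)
Your proposal is correct and follows essentially the same route as the paper: split off the finitely many ``short'' breakdowns at times $0\le i\le k-2$, and for the rest use the intertwining identity $B_{r+1}^p A_r\cdots A_0 = A_r\cdots A_0 B_0^p$ (your ``$f_{A_{i-1}}\cdots f_{A_0}\circ(\text{one period starting at }i)=f_{B_i}\circ f_{A_{i-1}}\cdots f_{A_0}$'') to identify later breakdowns with membership in the pullbacks of $\mathcal{F}_{B_i}$. The paper organizes the bookkeeping you flag as an induction on $n$ (assuming $x_1,\dots,x_n$ exist and $x_0$ lies outside the right-hand set, show $x_{n+1}$ exists), using exactly the matrix identity above together with the observation that if $f_{A_rN}(x_0)$ and $f_N(x_0)$ both exist then $f_{A_r}(f_N(x_0))$ exists---this is the clean way to handle the subtlety you anticipate about composite versus step-by-step undefinedness.
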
\vspace{-3mm}
\medskip
\begin{proof}
It is clear that if $x_0\in \mathcal{F}$ the solutions $(x_n)_{n\geq0}$ doesn't  exist.\\
Conversely, let us suppose that $ x_0\not\in \mathcal{F}$, then \\$x_0\not\in \left\{\prod\limits_{j=0}^{i-1}f_{A_j}^{-1}\left(-\frac{d_i}{c_i}\right), \  0\leq i\leq k-2\right\}$ which implies that $x_1,\ x_2,\cdots,x_{k-1}$ exist. Now let $n\geq k-1$ and let us suppose that $x_1, \ x_2,\cdots,x_n$ exist.
Let us prove that $x_{n+1}$ exists. Let $n=pk+r$, where $p$ and $r$ are integers such that$0\leq r\leq k-1$, then we distinguish two cases:
\medskip
\begin{itemize}[leftmargin=*]
\item Case 1: $r+1\leq k-1$. As $x_0\not\in \prod\limits_{j=0}^{r}f_{A_j}^{-1}(\mathcal{F}_{B_{r+1}})$, then $x_{r+1}=\prod\limits_{j=0}^{r}f_{A_{r-j}}(x_0)=
    f_{\prod\limits_{j=0}^{r}A_{r-j}}(x_0)\not\in\mathcal{F}_{B_{r+1}}$ so $f_{B_{r+1}}^p(x_{r+1})=f_{B_{r+1}^p A_{r}A_{r-1}\cdots A_0}(x_0)$ exist. Since
\medskip
     $f_{B_{r+1}^p A_{r}A_{r-1}\cdots A_0}(x_0)=f_{A_{r}A_{r-1}\cdots A_0B_0^p}(x_0)$, and $f_{A_{r-1}\cdots A_0B_0^p}(x_0)=x_n$ exist,
\medskip
      so $f_{A_r}(f_{A_{r-1}\cdots A_0B_0^p}(x_0))=f_{A_r}(x_n)=f_{A_n}(x_n)=x_{n+1}$ exist.
\vspace{0.2cm}
\item Case 2: $r+1=k$. As $x_0\not\in\mathcal{F}_{B_0}$ then
$f_{B_0}^{p+1}(x_0)=f_{A_{k-1}(A_{k-2}\cdots A_0B_0^p)}(x_0)$
\vspace{2mm}

\noindent
exist. Since $f_{A_{k-2}\cdots A_0B_0^p}(x_0)=x_n$ exists, so\\
\medskip
$f_{A_{k-1}(A_{k-2}\cdots A_0B_0^p)}(x_0)=f_{A_{k-1}}(f_{A_{k-2}\cdots A_0B_0^p}(x_0))=f_{k-1}(x_n)=f_n(x_n)=$
\vspace{2mm}
\noindent
$x_{n+1}$ exists.
\end{itemize}
Then the solution $(x_n)_{n\geq0}$ exist.
\end{proof}
\medskip
\begin{cor}\rm{(\cite{1})
If $k=2$, then the forbidden set of Eq.(\ref{pre}) is given by:
$$\mathcal{F}=\left\{-\frac{d_0}{c_0}\right\}\cup\mathcal{F}_{B_0}\cup
f_{A_0}^{-1}(\mathcal{F}_{B_1})$$ }
\end{cor}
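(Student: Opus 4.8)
The plan is to obtain this corollary as nothing more than the instance $k=2$ of Theorem 3.1, so that there is no fresh mathematical content to produce: the entire argument is careful bookkeeping with the empty-product / empty-composition conventions fixed immediately before the theorem (namely $\prod_{i=p}^q\xi_i=1$ and $\sum_{i=p}^q\xi_i=0$ when $p>q$). I would start by writing down the general description
$$\mathcal{F}=\left\{\prod_{j=0}^{i-1}f_{A_j}^{-1}\left(-\frac{d_i}{c_i}\right),\ 0\le i\le k-2\right\}\bigcup\bigcup_{0\le i\le k-1}\prod_{j=0}^{i-1}f_{A_j}^{-1}\left(\mathcal{F}_{B_i}\right)$$
and then substitute $k=2$ everywhere.

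Next I would simplify the first (finite) set: the index range $0\le i\le k-2$ collapses to the single value $i=0$, and for $i=0$ the outer composition $\prod_{j=0}^{-1}f_{A_j}^{-1}$ is empty, hence equals the identity map of $\mathbb{R}$; so the first set is exactly $\{-d_0/c_0\}$. Then I would treat the union: the range $0\le i\le k-1$ becomes $i\in\{0,1\}$; for $i=0$ the outer composition is again the identity, contributing the term $\mathcal{F}_{B_0}$, while for $i=1$ the composition $\prod_{j=0}^{0}f_{A_j}^{-1}=f_{A_0}^{-1}$, contributing $f_{A_0}^{-1}(\mathcal{F}_{B_1})$. Uniting the three pieces gives $\mathcal{F}=\{-d_0/c_0\}\cup\mathcal{F}_{B_0}\cup f_{A_0}^{-1}(\mathcal{F}_{B_1})$, which is the claimed formula. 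I would also point out in passing that at $k=2$ the matrices defined by $B_i=A_{i-1}\cdots A_0A_{k-1}\cdots A_i$ become the monodromy matrices $B_0=A_1A_0$ and $B_1=A_0A_1$, so that the displayed set agrees verbatim with the one in \cite{1}.

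Since everything reduces to a substitution, I do not expect any real obstacle; the one point that merits an explicit sentence of justification is the edge case $i=0$, where one must interpret $\prod_{j=0}^{-1}f_{A_j}^{-1}$ as the identity map (consistently with the convention $\prod_{i=p}^q\xi_i=1$ for $p>q$ applied at the level of function composition), so that the $i=0$ contributions are the genuine points $-d_0/c_0$ and the genuine set $\mathcal{F}_{B_0}$ rather than something undefined. Beyond flagging that convention, the proof is a two-line specialization of Theorem 3.1.
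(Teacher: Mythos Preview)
Your proposal is correct and matches the paper's approach exactly: the paper states this corollary immediately after Theorem 3.1 with no proof, indicating (as you do) that it is just the specialization $k=2$ of the general formula. Your careful handling of the empty composition at $i=0$ and the identification $B_0=A_1A_0$, $B_1=A_0A_1$ is precisely the bookkeeping the reader is expected to supply.
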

\medskip
\begin{exam}\rm{
Let $k=2$ and $A_0=\left(
                 \begin{array}{cc}
                   1 & 0 \\
                  1 & 1 \\
                 \end{array}
               \right)$, \ $A_1=\left(
                 \begin{array}{cc}
                   0 & 1 \\
                  1 & 1 \\
                 \end{array}
 \right)$.\\
 We have $B_0=A_1A_0=\left(
                   \begin{array}{cc}
                     1 & 1\\
                     2 & 1 \\
                   \end{array}
                 \right)$, \ $B_1=A_0A_1=\left(
                                       \begin{array}{cc}
                                         0 & 1 \\
                                         1 & 2 \\
                                       \end{array}
                                     \right)$.\\
\noindent
 So by Corollary A.3, we have:
 $$\mathcal{F}_{B_0}=\left\{-\frac{1}{2}-\frac{(1+\sqrt{2})^{n-1}-(1-\sqrt{2})^{n-1}}{2((1+\sqrt{2})^n-(1-\sqrt{2})^n)}, \ n\geq1\right\}$$
 and
 $$\mathcal{F}_{B_1}=\left\{-2-\frac{(1+\sqrt{2})^{n-1}-(1-\sqrt{2})^{n-1}}{(1+\sqrt{2})^n-(1-\sqrt{2})^n}, \ n\geq1\right\}$$
 Therefore by Theorem 3.1, the forbidden set of Eq.(\ref{pre}) is given by:
 $$\begin{array}{ccc}
     \mathcal{F}\hspace{-0.4cm} & = \hspace{-0.7cm}& \displaystyle{ \left\{-1\right\}}\cup\left\{-\displaystyle{\frac{1}{2}}-\displaystyle{\frac{(1+\sqrt{2})^{n-1}-(1-\sqrt{2})^{n-1}}{2((1+\sqrt{2})^n-(1-\sqrt{2})^n)}}, \ n\geq1\right\} \hspace{2cm}\\
     \\
      &  & \cup\left\{\displaystyle{\frac{-(3+2\sqrt{2})(1+\sqrt{2})^{n-1}+(3-2\sqrt{2})(1-\sqrt{2})^{n-1}}{(4+3\sqrt{2})(1+\sqrt{2})^{n-1}-(4-3\sqrt{2})(1-\sqrt{2})^{n-1}}}, \ n\geq1\right\}\hspace{1cm}
   \end{array}$$}

 \end{exam}
\subsection{\bf Asymptotic behavior and stability properties of the solutions} \ \\

In this paragraph we shall decompose the Eq.(\ref{pre}) into $k$ autonomous Riccati difference equations of
type Eq.(A.1).\\
\noindent
Indeed, let $$y_p^i=x_{pk+i}, \ 0\leq i\leq k-1$$
\vspace{-3mm}
\\
Then the sequence $(x_n)_{n\geq0}$ is a solution of Eq.(\ref{pre})
with initial value $u$ if and only if for all $0\leq i\leq k-1$, $(y_n^i)_{n\geq0}$ is a solution of the equation $y_{n+1}=f_{B_i}(y_n)$ with initial value $y_0^i=x_i=\prod\limits_{j=0}^{i-1}f_{A_{i-1-j}}(u)$, where $B_i=A_{i-1}\cdots A_0A_{k-1}\cdots A_{i}$.\\

Write
$$B_i=\left(\begin{array}{cc}
\tilde{a}_i & \tilde{b}_i \\
\tilde{c}_i & \tilde{d}_i \\
\end{array}\right), \ 0\leq i\leq k-1$$
We have \\
\vspace{-3mm}
\\
$\textrm{tr}(B_0)=\textrm{tr}(B_1)=\cdots= \textrm{tr}(B_{k-1}) \ \text{and} \ \textrm{det}(B_0)=\textrm{det}(B_1)=\cdots=\textrm{det}(B_{k-1})$\\
\vspace{-2mm}
\\
We let $T=\textrm{tr}(B_0)$, \ $D=\textrm{det}(B_0)$ and $\triangle: =T^2-4D$. Then $\triangle$ is the discriminant of the equation: $X^2-TX+D=0$ called the characteristic equation of Eq.(\ref{pre}).\vspace{-2mm}\\

\vspace{1mm}
Following the notation adopted in Appendix, we denote by:
\begin{itemize}[leftmargin=*]
  \item $\lambda$, \ $\mu$ the roots of a characteristic equation if $\triangle >0$ such that $|\lambda|>|\mu|$, and $\displaystyle{\rho_i=\frac{\lambda-\tilde{d}_i}{\tilde{c}_i}}$, for all $0\leq i\leq k-1$, such that $\tilde{c}_i\neq0$.
  \item $\displaystyle{re^{\pm i\theta}}$ the complex roots of a characteristic equation if $\triangle<0.$
\end{itemize}
\vspace{3mm}
 \hspace{3mm}The following Theorem describes the asymptotic behavior of the solutions of Eq.(\ref{pre})
\medskip
\begin{thm}\rm{
\noindent
\begin{enumerate}[leftmargin=*]
\item If $T=0$ then every solution of Eq.(\ref{pre}) is 2k-periodic.
\vspace{2mm}
\item If $T\neq0$ and $\tilde{c}_i\neq0$, for all $0\leq i\leq k-1$, then,
\vspace{2mm}
\begin{description}[leftmargin=*]
\item[(2-1)] If $\triangle>0$, we have:
\begin{description}
\vspace{2mm}
\item[(i)] If $\rho_0=\rho_1=\cdots=\rho_{k-1}=\rho$, then $\rho$ is a global attractor locally asymptotically stable for
Eq.(\ref{pre}) relative to the set\\
\vspace{1mm}
 $\mathbb{R}\setminus\left(\mathcal{F}\cup\left\{\prod\limits_{j=0}^{i-1}f_{A_j}^{-1}(
\frac{\mu-\tilde{d}_{i}}{\tilde{c}_{i}}); \ 0\leq i\leq k-1\right\}\right)$.
\item[(ii)] If there exist $i\neq j$ such that $\rho_i\neq\rho_j$, then the k-cycle solution
$\{\rho_0,\rho_1,\cdots,\rho_{k-1}\}$ attracts all solutions of Eq.(\ref{pre}) with initial value outside the set
$\mathcal{F}\cup\left\{\prod\limits_{j=0}^{i-1}f_{A_j}^{-1}(
\frac{\mu-\tilde{d}_{i}}{\tilde{c}_{i}}); \ 0\leq i\leq k-1\right\}$. In addition it is stable.
\end{description}
\vspace{1mm}
\item[(2-2)]If $\triangle=0$, we have:
\begin{description}
\vspace{2mm}
\item[(i)] If $\frac{\tilde{a}_0-\tilde{d}_0}{2\tilde{c}_0}=
\frac{\tilde{a}_1-\tilde{d}_1}{2\tilde{c}_1}=
\cdots=\frac{\tilde{a}_{k-1}-\tilde{d}_{k-1}}{2\tilde{c}_{k-1}}=\rho$, then $\rho$ is a global attractor for Eq.(\ref{pre}) relative to the set $\mathbb{R}\setminus\mathcal{F}$ but unstable.
\item[(ii)] If there exist $i\neq j$ such that $\frac{\tilde{a}_i-\tilde{d}_i}{2\tilde{c}_i}
\neq\frac{\tilde{a}_j-\tilde{d}_j}{2\tilde{c}_j}$, then the k-cycle solution
$\left\{\frac{\tilde{a}_0-\tilde{d}_0}{2\tilde{c}_0},\frac{\tilde{a}_1-\tilde{d}_1}{2\tilde{c}_1},
\cdots,\frac{\tilde{a}_{k-1}-\tilde{d}_{k-1}}{2\tilde{c}_{k-1}}\right\}$
attract all solutions of Eq.(\ref{pre}) with initial value outside the set $\mathcal{F}$, but unstable.
\end{description}
\vspace{2mm}
\item[(2-3)] If $\triangle<0$, we have:
\begin{description}
\vspace{2mm}
\item[(i)]  If $\theta=\frac{p}{q}\pi, \ p \in \mathbb{Z}, \ q \in \mathbb{N}\setminus\{0,1\}, \ \text{with}
\ gcd(p,q)=1$, then all solutions of Eq.(\ref{pre}) are $kq$-periodic.
\vspace{2mm}
\item[(ii)]If $\theta\not\in\pi\mathbb{Q}$, then every solution of Eq.(\ref{pre}) is dense in $\mathbb{R}$.
\end{description}
\end{description}
\end{enumerate}}
\end{thm}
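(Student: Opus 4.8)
The plan is to build on the decomposition set up just above the statement: fix $u\in\mathbb{R}\setminus\mathcal{F}$ (so, by Theorem 3.1, the whole solution $(x_n)_{n\geq0}$ exists) and put $y_p^i=x_{pk+i}$ for $0\leq i\leq k-1$; then each $(y_p^i)_{p\geq0}$ solves the \emph{autonomous} Riccati equation $y_{p+1}=f_{B_i}(y_p)$ with $y_0^i=x_i=f_{A_{i-1}}\cdots f_{A_0}(u)$, and $(x_n)$ is exactly the interleaving of the $k$ sequences $(y_p^0),\dots,(y_p^{k-1})$. So the task reduces to applying the Appendix (the autonomous case $k=1$) to each of $B_0,\dots,B_{k-1}$ and then transporting the conclusions back through the finitely many M\"obius maps $f_{A_j}$. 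The glue between the $k$ autonomous equations is the conjugacy $B_{i+1}=A_iB_iA_i^{-1}$ (read modulo $k$), immediate from the cyclic form of the $B_i$; by Proposition 2.4 this gives $f_{B_{i+1}}=f_{A_i}\circ f_{B_i}\circ f_{A_i}^{-1}$, hence $\operatorname{tr}(B_i)=T$, $\det(B_i)=D$ for all $i$, and — crucially — that the fixed points and periodic orbits of $f_{B_{i+1}}$ are the $f_{A_i}$-images of those of $f_{B_i}$. In particular $\rho_{i+1}=f_{A_i}(\rho_i)$ and $\tfrac{\tilde{a}_{i+1}-\tilde{d}_{i+1}}{2\tilde{c}_{i+1}}=f_{A_i}\!\big(\tfrac{\tilde{a}_i-\tilde{d}_i}{2\tilde{c}_i}\big)$ cyclically, so when these quantities are independent of $i$ they are an equilibrium of Eq.(\ref{pre}), and when they are not they form a genuine $k$-periodic solution of Eq.(\ref{pre}).

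With this in hand the attracting and periodic regimes are routine. For (1), $T=0$ forces $B_i^2=-D\,I$ by Cayley--Hamilton, so $f_{B_i}^2=\mathrm{id}$, i.e. $y_{p+2}^i=y_p^i$ for every $i$, which is exactly $x_{n+2k}=x_n$. When $\triangle>0$ the Appendix gives that $\rho_i=\tfrac{\lambda-\tilde{d}_i}{\tilde{c}_i}$ is the attracting fixed point of $f_{B_i}$ (multiplier $\mu/\lambda$, of modulus $<1$), that $\tfrac{\mu-\tilde{d}_i}{\tilde{c}_i}$ is the repelling one, and that $y_p^i\to\rho_i$ for every initial value outside $\mathcal{F}_{B_i}$ and different from the repelling fixed point; a backward-orbit argument (the repelling fixed point is its own $f_{B_i}^{-1}$-image) shows the latter is the \emph{only} initial value whose orbit meets it. Translating: if $u$ avoids $\mathcal{F}$ together with the $k$ exceptional points $\prod_{j=0}^{i-1}f_{A_j}^{-1}\!\big(\tfrac{\mu-\tilde{d}_i}{\tilde{c}_i}\big)$, then $y_p^i\to\rho_i$ for all $i$, hence $x_{pk+i}\to\rho_i$; this is convergence to $\rho$ in (2-1)(i) and attraction by the $k$-cycle $\{\rho_0,\dots,\rho_{k-1}\}$ in (2-1)(ii). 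The case $\triangle=0$ is identical with $\rho_i$ replaced by the parabolic fixed point $\tfrac{\tilde{a}_i-\tilde{d}_i}{2\tilde{c}_i}$, except that the Appendix convergence now holds for \emph{every} initial value outside $\mathcal{F}_{B_i}$, so no exceptional points need be removed. For (2-3) with $\triangle<0$, $f_{B_i}$ is conjugate to a rotation: the Appendix shows it is periodic, all orbits sharing a common period $q$, exactly when $\theta=\tfrac{p}{q}\pi$ in lowest terms, and that every orbit is dense in $\mathbb{R}$ when $\theta\notin\pi\mathbb{Q}$; so $y_{p+q}^i=y_p^i$ for all $i$, i.e. $x_{n+kq}=x_n$, respectively $(x_{pk})_{p\geq0}=(y_p^0)_{p\geq0}$ is already dense in $\mathbb{R}$, hence so is $(x_n)_{n\geq0}$.

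The part I expect to cost real work is the stability bookkeeping: local asymptotic stability of $\rho$ in (2-1)(i), stability of the $k$-cycle in (2-1)(ii), and instability in (2-2). For the two stable statements the scheme is: given $\varepsilon>0$, pick a radius $\delta\leq\varepsilon$ small enough that $f_{B_i}$ maps the $\delta$-ball about its attracting fixed point strictly into itself for \emph{every} $i$ (legitimate since there are only finitely many $B_i$); then use continuity at $\rho$ (resp. at $\rho_0$) of the finitely many maps $f_{A_{i-1}}\cdots f_{A_0}$, each of which sends that point to $\rho_i$, to choose $\alpha>0$ with $|x_0-\rho|<\alpha\Rightarrow|x_i-\rho_i|<\delta$ for $0\leq i\leq k-1$; an induction on $p$ then keeps every $y_p^i$, and hence every $x_n$, within $\delta\leq\varepsilon$ of the orbit, while the global-attractor part supplies the limit. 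Instability in (2-2) runs the other way: for a parabolic $f_{B_0}$, an orbit started just on the repelling side of the fixed point first runs out to arbitrarily large values (it passes near $\infty$) before returning, and pulling such data back by $f_{A_0}^{-1},\dots$ produces, arbitrarily near the equilibrium, initial values whose solutions leave any prescribed bounded neighbourhood. The remaining delicate point is the precise accounting of which finitely many points must be excised from $\mathbb{R}\setminus\mathcal{F}$, and the verification — via the backward-orbit argument above — that no others can spoil the convergence.
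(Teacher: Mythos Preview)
Your proposal is correct and follows essentially the same route as the paper: decompose the solution into the $k$ subsequences $y_p^i=x_{pk+i}$, apply the autonomous Appendix result (Proposition~A.4) to each equation $y_{p+1}=f_{B_i}(y_p)$, and transfer the conclusions back through the finitely many M\"obius maps $f_{A_{i-1}}\cdots f_{A_0}$, using the cyclic conjugacy $B_{i+1}=A_iB_iA_i^{-1}$ to link the fixed points. Your stability argument (forward-invariant $\delta$-balls plus continuity of the finite compositions) is a minor rephrasing of the paper's $\alpha_i,\beta_i$ bookkeeping, and your instability sketch for the parabolic case is a more geometric version of the paper's one-line reduction to the instability in Proposition~A.4.
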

\medskip

\begin{proof} \begin{description}[leftmargin=*]
\item[(1)] Suppose that $T=0$, then by Proposition A.4, we have: for all $0\leq i\leq k-1$,
the solution $(y_n^i)_{n\geq0}$ of equation $y_{n+1}=f_{B_i}(y_n)$ is 2-periodic.
Now let $n \ \in \mathbb{N}$, and $(x_n)_{n\geq0}$ a solution of Eq.(\ref{pre}). As $n=pk+r$ where $p, \ r$ are integers such that $0\leq r\leq k-1$, then
$x_{n+2k}=x_{pk+r+2k}=x_{(p+2)k+r}=y_{p+2}^r=y_p^r=x_n$, therefore the solution $(x_n)_{n\geq0}$ is 2k-periodic.
\medskip

\item[(2-1)-(i)] Suppose that $T\neq0, \ \tilde{c}_i\neq0, \ \text{for all} \ 0\leq i\leq k-1, \ \triangle>0, \ \text{and}$\\ $\rho_0=\rho_1=\cdots=\rho_{k-1}=\rho$, then by Proposition A.4 we have: for all $0\leq i\leq k-1$, and for all solution $(y_n^i)_{n\geq0}$ of equation $y_{n+1}=f_{B_i}(y_n)$, \
with initial value in $\mathbb{R}\setminus(\mathcal{F}\cup\{\frac{\mu-d_i}{c_i}\})$ we have:
\ $\displaystyle{\lim_{n\rightarrow+\infty}y_n^i=\rho}$, so for all $u\ \in\ \mathbb{R}\setminus\mathcal{F}$ such that
$u\neq\frac{\mu-d_0}{c_0}, \ x_1(u)\neq \frac{\mu-d_1}{c_1},\cdots, \ x_{k-1}(u)\neq \frac{\mu-d_{k-1}}{c_{k-1}}$,
the solution $(x_n(u))_{n\geq0}$ of Eq.(\ref{pre}) with initial value $u$ converges to $\rho$. Therefore the point
$\rho$ is a global attractor for Eq.(\ref{pre}) relative to the set\\
 $\mathcal{G}=:\mathbb{R}\setminus\left(\mathcal{F}\cup\left\{\prod\limits_{j=0}^{i-1}f_{A_j}^{-1}(
\frac{\mu-\tilde{d}_{i}}{\tilde{c}_{i}}); \ 0\leq i\leq k-1\right\}\right)$.\\
\noindent
Now we show that $\rho$ is stable. Let $\epsilon>0$. Since for all $0\leq i\leq k-1$, the point $\rho$ is stable
for equation $y_{n+1}=f_{B_i}(y_n)$ \ (Proposition A.4), there exist $\alpha_i>0$ such that:
$\forall\ y_0^i \in\ \mathbb{R}\setminus(\mathcal{F}\cup\{\frac{\mu-d_i}{c_i}\})$, such that: $|y_0^i-\rho|<\alpha_i$,
and for all solution $(y_n^i)_{n\geq0}$ of equation $y_{n+1}=f_{B_i}(y_n)$, we have $|y_n^i-\rho|<\epsilon$,
for all
$n\geq0$. In addition, as $x_1, \ x_2,\cdots, \ x_{k-1}$ are continuous at $\rho$,
and $ x_1(\rho)= x_2(\rho)=\cdots=x_{k-1}(\rho)=\rho$, then for all $1\leq i\leq k-1$, there exists
$\beta_i>0$ such that: for all $u$ with $|u-\rho|<\beta_i$, we have $|x_i(u)-\rho|<\alpha_i$. Finally let us take
$\eta=\min\{\alpha_0, \ \beta_1, \ \beta_2,\cdots, \ \beta_{k-1}\}$ and $u \in \mathcal{G}$ such that $|u-\rho|<\eta$, \ $(x_n(u))_{n\geq0}$ a solution of Eq.(\ref{pre}) with initial value $u$, and $n$ is a integer, since $n=pk+i$ where $p, \ i$ are integers such that $0\leq i\leq k-1$, then:
\begin{itemize}[leftmargin=*]
\item If $i=0$, as $|u-\rho|<\eta\leq\alpha_0$, then $|x_n-\rho|=|y_p-\rho|<\epsilon$.
\item If $1\leq i\leq k-1$, as $|u-\rho|<\eta\leq\beta_i$, then $|x_i(u)-\rho|=|y_0^i(u)-\rho|<\alpha_i$, which implies
that $|x_n-\rho|=|y_p^i-\rho|<\epsilon$.
\end{itemize}
Therefore the point $\rho$ is stable.
\medskip

\item[(2-1)-(ii)] We have $\rho_{i+1}=f_{A_i}(\rho_i), \ \text{for all} \ 0\leq i\leq k-2, \ \text{and} \ f_{B_i}(\rho_i)=\rho_i$,\\ $\text{for all} \ 0\leq i\leq k-1$. Then $\mathcal{C}:=\{\rho_0, \ \rho_1,\ldots,
\ \rho_{k-1}\}$ is a $k$-cycle solution of Eq.(\ref{pre}). Now let $u \in \mathcal{G}$, and $(x_n(u))_{n\geq0}$ a
solution of Eq.(\ref{pre}) with initial value $u$. Then by Proposition A.4,
we have: $\displaystyle{\lim_{n\rightarrow+\infty}x_{nk+i}=\rho_i, \ \text{for all}}$\\ $0\leq i \leq k-1$, so the cycle  $\mathcal{C}$ attract all solutions of Eq.(\ref{pre}) with initial value in $\mathcal{G}$. Similarly to
the Case 2-1-(i), we easily prove that $\mathcal{C}$ is stable (in
the sense cited in \cite{2} (definition 7)). Indeed let $\epsilon>0$,
and set \\$\eta=\min\{\alpha_0, \ \beta_1, \ \beta_2,\cdots, \ \beta_{k-1}\}$ as in 2-1-(i). Then for all
$u \in \mathcal{G}$ such that $|u-\rho|<\eta$, for all $n \in \mathbb{N}$, and for all $0\leq i\leq k-1$, we have: $|x_{nk+i}(u)-\rho_i|<\epsilon$. Therefore $\mathcal{C}$ is stable.
\medskip

\item[(2-2)-(i)] Assume that $T\neq0, \ \tilde{c}_i\neq0, \ \text{for all} \ 0\leq i\leq k-1, \ \triangle>0, \ \text{and}$\\
$\frac{\tilde{a}_0-\tilde{d}_0}{2\tilde{c}_0}=
\frac{\tilde{a}_1-\tilde{d}_1}{2\tilde{c}_1}=
\cdots=\frac{\tilde{a}_{k-1}-\tilde{d}_{k-1}}{2\tilde{c}_{k-1}}=\rho$, then similarly to the case 2-1-(i),
we can show that all solutions of Eq.(\ref{pre}) converge to $\rho$.
The instability of $\rho$ follows from the fact that $\rho$ is unstable as equilibrium point of equations $y_{n+1}=f_{B_i}(y_n)$ \ (Proposition A.4).
\medskip

\item[(2-2)-(ii)]The study of convergence of solutions is similar to the Case 2-1-(ii), and the instability character is similar to the Case 2-2-(i).
\medskip

\item[(3-1)] Assume that $\triangle<0, \ \text{and} \ \theta=\frac{p}{q}\pi, \ p \in \mathbb{Z},
\ q \in \mathbb{N}\setminus\{0,1\}$\ such that \ $gcd(p,q)=1$.
Then by Proposition A.4, we have: for all $0\leq i\leq k-1$, the solution $(y_n^i)_{n\geq0}$ of
equation $y_{n+1}=f_{B_i}(y_n)$ is q-periodic. Now let $n \ \in \mathbb{N}$, and $(x_n)_{n\geq0}$ a solution of
Eq.(\ref{pre}), as $n=pk+r$ where $p, \ r$ are integers such that $0\leq r\leq k-1$. Then
$x_{n+2qk}=x_{pk+r+2qk}=x_{(p+2q)k+r}=y_{p+2q}^r=y_p^r=x_n$, therefore the solution $(x_n)_{n\geq0}$ is $qk$-periodic.
\medskip

\item[(3-2)] This follows from the fact that $(x_{nk+i})_{n\geq0}$ is dense in $\mathbb{R}$ for all\\ $0\leq i\leq k-1$
(see Proposition A.4).
\end{description}
\end{proof}

\begin{exam}\rm{
\noindent
\begin{enumerate}[leftmargin=*]
\item Let $k=4$ and we let $A_0=\left(
             \begin{array}{cc}
               1 & 0 \\
               1 & -1 \\
             \end{array}\right), \
A_1=\left(\begin{array}{cc}
        1 & 0 \\
        -1 & 1 \\
      \end{array}\right)$,\\
$A_2=\left(\begin{array}{cc}
        1 & 1 \\
       -1 & 1 \\
      \end{array}\right) \ \text{and} \
A_3=\left(\begin{array}{cc}
        1 & -1 \\
        1 & 1 \\
      \end{array}
    \right)$.\\
 We have $B_0=A_3A_2A_1A_0=\left(\begin{array}{cc}
                                      2 & 0 \\
                                      0& -2 \\
                                    \end{array}\right)$. Since $T=0$,
so by Theorem 3.4,(1), every solution of Eq.(\ref{pre}) is 8-periodic.
\vspace{2mm}
\item Let $k=2$ and we let $A_0=\left(
             \begin{array}{cc}
               1 & 1 \\
               1 & 0 \\
             \end{array}
           \right) \ \text{and} \
A_1=\left(
      \begin{array}{cc}
        1 & -1 \\
        -1 & 2 \\
      \end{array}
    \right)$. We have $B_0=B_1=\left(
                                    \begin{array}{cc}
                                      0 & 1 \\
                                      1& -1 \\
                                    \end{array}
                                  \right)$, \ $T=-1, \ \triangle=5>0, \ \lambda=-\frac{1+\sqrt{5}}{2}, \\ \mu=\frac{-1+\sqrt{5}}{2}, \ \rho_0=\rho_1=\frac{1-\sqrt{5}}{2}$.
Then by Theorem 3.4,(2-1),(i), the point $\frac{1-\sqrt{5}}{2}$ is a global attractor locally asymptotically stable for Eq.(\ref{pre}) relative to set
 $\mathbb{R}\setminus\left(\mathcal{F}\cup\left\{\frac{1+\sqrt{5}}{2}\right\}\right)$.
\vspace{2mm}
\item Let $k=2$ and we let $A_0=\left(
             \begin{array}{cc}
               1 & 0 \\
               1 & 1 \\
             \end{array}
           \right) \ \text{and} \
A_1=\left(
      \begin{array}{cc}
        0 & 1 \\
        1& 1 \\
      \end{array}
    \right)$. We have $B_0=\left(
                                    \begin{array}{cc}
                                     1 & 1 \\
                                      2& 1 \\
                                    \end{array}
                                  \right)$,
$B_1=\left(
                                    \begin{array}{cc}
                                     0 & 1 \\
                                      1& 2 \\
                                    \end{array}
                                  \right)$,
\ $T=2, \ \triangle=8>0, \ \lambda=1+\sqrt{2}, \\ \mu=1+\sqrt{2}, \ \rho_0=\frac{\sqrt{2}}{2}, \ \rho_1=\sqrt{2}-1$. Then by Theorem 3.4,(2-1),(ii), the 2-cycle $\left\{\frac{\sqrt{2}}{2}, \ \sqrt{2}-1\right\}$, is a global attractor locally asymptotically stable for Eq.(\ref{pre}) relative to set
 $\mathbb{R}\setminus\left(\mathcal{F}\cup\left\{-\frac{\sqrt{2}}{2}\right\}\right)$.
\end{enumerate}
}\end{exam}
\section{\bf The special case: $b_n=0$ }
In this section we suppose that $b_n=0$, for all integers $n$. By dividing by $d_n$, the Eq.(\ref{pre}) is reduced to the equation:
\begin{equation}
x_{n+1}=\frac{a_nx_n}{c_nx_n+1}, \ a_nc_n\neq0, \ n=0,1,\cdots \label{spre}
\end{equation}
This equation was partially investigated by Clarck, M.E. and Gross, L.J. (\cite{8}) when the parameters $a_n, \ c_n$ and initial
value are positive, and by S. Stevi\'c (\cite{7}) in view to describing the forbidden set when the parameters for $a_n, \ c_n$ and initial value are arbitrary reals.\\
Based on the theoretical support that we developed in Section 3, we will give in this section a complete study of
Eq.(\ref{spre}), which extends and improves the results of M.E. Clarck, and L.J. Gross,
Moreover we precise the forbidden set given by S. Stevi\'c.\\

By induction and a basic matrix calculation we can show the following:
\medskip
\begin{lem}\rm{ Let $ A_{n}
=\left(\begin{array}{cc}
a_n & 0 \\
c_n & 1 \\
\end{array}\right), \ n\geq0$. Then
\noindent
\begin{enumerate}[leftmargin=*]
\item For all $0\leq i\leq k-1$, we have:
$$B_i=\left(\begin{array}{cc}
\tilde{a}& \ \ 0 \\
\\
\tilde{c}_i & \ \ 1 \\
\end{array}\right)$$
 where
 $$\tilde{a}=\prod\limits_{j=0}^{k-1} a_j \ \ \text{and} \ \ \tilde{c}_i=\prod\limits_{j=i}^{k-1} a_j\sum\limits_{l=0}^{i-1}c_l\prod_{r=0}^{l-1} a_r+\sum\limits_{s=i}^{k-1}c_s\prod\limits_{t=i}^{s-1} a_t$$
\item $$ B_i^n=\left(\begin{array}{cc}
           \tilde{a}^n & 0 \\
\\
            \displaystyle{\frac{\tilde{a}^n-1}{\tilde{a}-1}\tilde{c_i}} & 1 \\
 \end{array} \right), \ \text{for all} \ 0\leq i\leq k-1, \ \text{and all} \ n\geq0$$
\vspace{1mm}
with the convention $\displaystyle{\frac{\tilde{a}^n-1}{\tilde{a}-1}}=n, \ \text{if} \ \tilde{a}=1$.
\vspace{1mm}
\item The forbidden set $\mathcal{F}_{B_i}$ is given by:
$$\left\{\begin{array}{ccc}
      \left\{-\left(\displaystyle{\frac{\tilde{a}^n-1}{\tilde{a}-1}}\tilde{c_i}\right)^{-1}, \ n\geq1\right\}&, \ \text{if}&\tilde{c}_i\neq0 \\
      \\
      \emptyset &, \ \text{if}&\tilde{c}_i=0
    \end{array}\right.$$
\end{enumerate}
}\end{lem}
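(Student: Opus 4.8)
The plan is to establish the three items in order, each reducing to an elementary $2\times2$ computation once the structural results of Section~2 are brought to bear, so that essentially no lengthy calculation is needed. For part~(1) I would split $B_i=(A_{i-1}\cdots A_0)(A_{k-1}\cdots A_i)$ into a ``head'' block and a ``tail'' block. The head block $A_{i-1}\cdots A_0$ is precisely the matrix $\bar A_i$ of Section~2, so Proposition~2.6 gives it as $\bigl(\begin{smallmatrix}\prod_{j=0}^{i-1}a_j & 0\\ \sum_{l=0}^{i-1}c_l\prod_{r=0}^{l-1}a_r & 1\end{smallmatrix}\bigr)$; applying Proposition~2.6 to the shifted sequences $(a_{m+i})_m$, $(c_{m+i})_m$ (or, equivalently, a one-line induction on the number of factors) identifies the tail block $A_{k-1}\cdots A_i$ as $\bigl(\begin{smallmatrix}\prod_{j=i}^{k-1}a_j & 0\\ \sum_{s=i}^{k-1}c_s\prod_{t=i}^{s-1}a_t & 1\end{smallmatrix}\bigr)$. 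Multiplying these two lower-triangular matrices, the $(1,1)$ entry is $\prod_{j=0}^{i-1}a_j\cdot\prod_{j=i}^{k-1}a_j=\prod_{j=0}^{k-1}a_j=\tilde a$ (in particular it does not depend on $i$), the $(2,2)$ entry stays $1$, and the $(2,1)$ entry is exactly the asserted $\tilde c_i$.

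For part~(2), with the shape $B_i=\bigl(\begin{smallmatrix}\tilde a & 0\\ \tilde c_i & 1\end{smallmatrix}\bigr)$ in hand I would induct on $n$: the case $n=0$ is the identity, and right-multiplying the inductive hypothesis by $B_i$ sends the $(2,1)$ entry $\tfrac{\tilde a^{n}-1}{\tilde a-1}\tilde c_i$ to $\tfrac{\tilde a^{n}-1}{\tilde a-1}\tilde c_i\,\tilde a+\tilde c_i=\tfrac{\tilde a^{n+1}-1}{\tilde a-1}\tilde c_i$, using $\tfrac{\tilde a^{n}-1}{\tilde a-1}\tilde a+1=\tfrac{\tilde a^{n+1}-1}{\tilde a-1}$; when $\tilde a=1$ the convention $\tfrac{\tilde a^n-1}{\tilde a-1}=n$ turns this step into $n\tilde c_i+\tilde c_i=(n+1)\tilde c_i$, so the formula persists. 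For part~(3), the equation $x_{n+1}=f_{B_i}(x_n)$ is autonomous, hence $\bar A_n=B_i^{\,n}$ and Corollary~2.5(1) gives $\mathcal F_{B_i}=\{-\bar d_n/\bar c_n:\ n\ge1,\ \bar c_n\neq0\}$. Substituting $\bar d_n=1$ and $\bar c_n=\tfrac{\tilde a^n-1}{\tilde a-1}\tilde c_i$ from part~(2) yields the dichotomy: if $\tilde c_i=0$ then $\bar c_n=0$ for every $n$ and $\mathcal F_{B_i}=\emptyset$; if $\tilde c_i\neq0$ then $\mathcal F_{B_i}=\bigl\{-\bigl(\tfrac{\tilde a^n-1}{\tilde a-1}\tilde c_i\bigr)^{-1}:\ n\ge1\bigr\}$, with the indices $n$ for which $\tfrac{\tilde a^n-1}{\tilde a-1}=0$ — over $\mathbb R$ this can happen only for $\tilde a=-1$ and $n$ even — being vacuous and tacitly omitted.

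None of this is deep; the one spot that requires care is the index bookkeeping in part~(1): checking that the head/tail splitting and the index shift in Proposition~2.6 produce exactly those two triangular matrices, and that the conventions $\prod_{i=p}^q\xi_i=1$ and $\sum_{i=p}^q\xi_i=0$ for $p>q$ keep the formulas correct at the endpoints $i=0$ and $i=k-1$. That is the only obstacle I anticipate.
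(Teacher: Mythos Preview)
Your proposal is correct and matches the paper's approach: the paper itself offers no detailed proof, merely stating that the lemma follows ``by induction and a basic matrix calculation,'' which is exactly what you have fleshed out. Your use of Proposition~2.6 for the head and tail factors in part~(1), the one-step induction in part~(2), and the appeal to Corollary~2.5 in part~(3) are all sound, and your remark about the vacuous indices when $\tilde a=-1$ is a nice piece of extra care.
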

\medskip
\noindent
Therefore, we have the following:
\medskip
\begin{prop}\rm{
The forbidden of Eq.(\ref{spre}) is given by:\\
\vspace{1mm}

\noindent
$
\begin{array}{c}\mathcal{F}=\left\{-\left(\sum\limits_{j=0}^{i-1}c_j
\prod\limits_{r=0}^{j-1} a_r+c_i\prod\limits_{j=0}^{i-1} a_j\right)^{-1}; \ 0\leq i\leq k-2: \ \sum\limits_{j=0}^{i-1}c_j
\prod\limits_{r=0}^{j-1} a_r+c_i\prod\limits_{j=0}^{i-1} a_j\neq0\right\} \hspace{1.5cm}\\
\\
\bigcup\bigcup\limits_{i \in J_k}\left\{-\left(\sum\limits_{j=0}^{i-1}c_j\prod\limits_{r=0}^{j-1} a_r+\frac{\tilde{a}^n-1}{\tilde{a}-1}\tilde{c_i}\prod\limits_{j=0}^{i-1} a_j\right)^{-1}; \ n\geq1: \ \sum\limits_{j=0}^{i-1}c_j\prod\limits_{r=0}^{j-1} a_r+\frac{\tilde{a}^n-1}{\tilde{a}-1}\tilde{c_i}\prod\limits_{j=0}^{i-1} a_j\neq0\right\}\end{array}$\\
\vspace{5mm}

where $J_k=\{i, \ 0\leq i\leq k-1: \ \tilde{c}_i\neq0\}$.
}\end{prop}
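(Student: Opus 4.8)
The plan is to read the forbidden set of Eq.(\ref{spre}) directly off Theorem 3.1, applied to the matrices $A_n=\left(\begin{array}{cc} a_n & 0\\ c_n & 1\end{array}\right)$, feeding in Lemma 4.1 for the sets $\mathcal{F}_{B_i}$ and computing in closed form the Möbius maps $\prod_{j=0}^{i-1}f_{A_j}^{-1}$ that appear in that theorem; this last computation is the only real work. First I would observe that $A_0^{-1}A_1^{-1}\cdots A_{i-1}^{-1}=(A_{i-1}\cdots A_0)^{-1}$, so that, by Proposition 2.4, $\prod_{j=0}^{i-1}f_{A_j}^{-1}=f_{\bar{A}_i^{-1}}$ where $\bar{A}_i:=A_{i-1}\cdots A_0$. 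Since our coefficients satisfy $b_n=0$ and $d_n=1$, this $\bar{A}_i$ is the matrix $\bar{A}_i$ of Proposition 2.6: with $P_i=\prod_{j=0}^{i-1}a_j$ and $\sigma_i=\sum_{j=0}^{i-1}c_j\prod_{r=0}^{j-1}a_r$ we get $\bar{A}_i=\left(\begin{array}{cc} P_i & 0\\ \sigma_i & 1\end{array}\right)$. As $a_nc_n\neq0$ for every $n$, $\det\bar{A}_i=P_i\neq0$, so $\bar{A}_i$ is invertible, and by Proposition 2.4(1) $f_{\bar{A}_i^{-1}}$ is the Möbius map of the scalar multiple $\left(\begin{array}{cc} 1 & 0\\ -\sigma_i & P_i\end{array}\right)$ of $\bar{A}_i^{-1}$; hence
\[
\prod_{j=0}^{i-1}f_{A_j}^{-1}(y)=\frac{y}{-\sigma_i y+P_i}.
\]

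With this in hand I would substitute the two ingredients of Theorem 3.1. For the first family the relevant point is $-d_i/c_i=-1/c_i$ (as $d_i=1$), and the displayed formula gives
\[
\prod_{j=0}^{i-1}f_{A_j}^{-1}\!\left(-\tfrac{1}{c_i}\right)=-\bigl(\sigma_i+c_iP_i\bigr)^{-1},\qquad 0\le i\le k-2,
\]
which is exactly the first set in the statement, the side condition $\sigma_i+c_iP_i\neq0$ recording precisely when this expression is a finite real number. For the second family, Lemma 4.1(3) gives $\mathcal{F}_{B_i}=\bigl\{-(e_n\tilde c_i)^{-1}:n\ge1\bigr\}$ when $\tilde c_i\neq0$, i.e. when $i\in J_k$, and $\mathcal{F}_{B_i}=\emptyset$ otherwise, where $\tilde a=\prod_{j=0}^{k-1}a_j$ and $e_n:=\frac{\tilde a^{\,n}-1}{\tilde a-1}$ (with $e_n=n$ if $\tilde a=1$). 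Applying $f_{\bar{A}_i^{-1}}$ to $y=-(e_n\tilde c_i)^{-1}$ and simplifying yields $-\bigl(\sigma_i+e_n\tilde c_iP_i\bigr)^{-1}$, which is the second set, again with the stated non-vanishing condition marking finiteness. Taking the union of the two pieces over the indicated ranges — only $i\in J_k$ surviving in the second union, since $\mathcal{F}_{B_i}=\emptyset$ for the others — reproduces the claimed formula for $\mathcal{F}$.

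Since the argument is a direct specialization of Theorem 3.1, I expect the only delicate point to be the bookkeeping around $\infty$ and the side conditions. Concretely, $\prod_{j=0}^{i-1}f_{A_j}^{-1}(v)$ in Theorem 3.1 should be understood as the unique real $x_0$ with $f_{\bar{A}_i}(x_0)=v$ when such an $x_0$ exists; I would check that this $x_0$ is the value produced by the Möbius formula above exactly when the denominator $-\sigma_i v+P_i$ is nonzero, and that when that denominator vanishes the corresponding element of $\mathcal{F}_{B_i}$ (or the point $-d_i/c_i$) is the image of $x_0=\infty$ under $f_{\bar{A}_i}$, hence is never produced by a real initial value and legitimately drops out of $\mathcal{F}\subseteq\mathbb{R}$. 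The remaining facts — the closed form of $\bar{A}_i$ and the identity $f_{\bar{A}_i}f_{\bar{A}_i^{-1}}=\mathrm{id}$ — are immediate from Propositions 2.4 and 2.6, and I would only indicate, rather than expand, these routine verifications.
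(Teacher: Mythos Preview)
Your proposal is correct and follows essentially the same approach as the paper: both reduce $\prod_{j=0}^{i-1}f_{A_j}^{-1}$ to $f_{\bar{A}_i^{-1}}$ via Proposition~2.4, compute $\bar{A}_i$ and its inverse using Proposition~2.6, and then substitute into Theorem~3.1 together with Lemma~4.1. If anything, your write-up is more thorough than the paper's, which stops at ``Finally, by Theorem~3.1, we deduce Proposition~4.2'' without carrying out the substitution or discussing the side conditions you carefully track.
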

\medskip
\begin{proof}\rm{
By Proposition 2.4, we have:\\
\vspace{2mm}
\noindent
$\prod\limits_{j=0}^{i-1}f_{A_{i-1-j}}=f_{\prod\limits_{j=0}^{i-1}A_{i-1-j}}=f_{\bar{A}_i}$
and $(f_{\bar{A}_i})^{-1}(X)=f_{\bar{A}_i^{-1}}(X)$, for all $X\subset\mathbb{R}$
\vspace{2mm}
\noindent
and for all $0\leq i\leq k-1$.\\
\vspace{2mm}
\noindent
Since $\prod\limits_{j=0}^{i-1}f_{A_j}^{-1}(X)=
\left(\prod\limits_{j=0}^{i-1}f_{A_{i-1-j}}\right)^{-1}(X), \ \text{for all} \ X\subset\mathbb{R}$, then
\vspace{-1mm}
$$\prod\limits_{j=0}^{i-1}f_{A_j}^{-1}(X)=f_{\bar{A}_i^{-1}}(X), \ \text{for all} \ X\subset\mathbb{R} \ \text{and for all} \ 0\leq i\leq k-1$$
\vspace{1mm}
\noindent
Now, by Proposition 2.6 the matrices $\bar{A}_i$ are given by:
$$\bar{A}_n=\left(\begin{array}{cc}
\prod\limits_{i=0}^{n-1} a_i & 0 \\
                         \\
\sum\limits_{i=0}^{n-1}c_i\prod\limits_{j=0}^{i-1}a_j & 1\\
\end{array}\right)$$
so
$$(\bar{A}_n)^{-1}=\left(\prod\limits_{i=0}^{n-1} a_i\right)^{-1}\left(\begin{array}{cc}
1 & 0 \\
                         \\
-\sum\limits_{i=0}^{n-1}c_i\prod\limits_{j=0}^{i-1}a_j & \prod\limits_{i=0}^{n-1} a_i\\
\end{array}\right)$$
Finally, by Theorem 3.1,  we deduce Proposition 4.2.
}\end{proof}
\medskip

Now using Theorem 3.4, we shall give a complete description of the asymptotic behavior of solutions of Eq.(\ref{spre}). The following result extends and improves the result of Clarck, M.E. and Gross, L.J. (\cite{8}), (see also \cite{5}, Appendix A, Theorem A.5).
\begin{prop} \rm{
\noindent
\begin{enumerate}[leftmargin=*]
\item If $\tilde{a}=-1$, then every solution of Eq.(\ref{spre}) is 2k-periodic.
\vspace{3mm}
\item If $\tilde{a}\neq-1$, we have:
\vspace{3mm}
\begin{description}
\item[(i)] $\mid\tilde{a}\mid<1$, then 0 is a global attractor locally asymptotically stable for Eq.(\ref{spre}) relative to set\\
 $\mathbb{R}\setminus(\mathcal{F}\cup\{\prod_{j=0}^{i-1}f_{A_i}^{-1}
 (\frac{\tilde{a}-1}{\tilde{c}_i}); \ 0\leq i\leq k-1: \ \tilde{c}_i\neq0\})$.
\vspace{3mm}
\item[(ii)] If $\tilde{a}=1$, and $\tilde{c}_i\neq0$ for all $0\leq i\leq k-1$, then 0 is a global attractor for Eq.(\ref{spre}) relative to the set $\mathbb{R}\setminus\mathcal{F}$ but is unstable.
\vspace{3mm}
\item[(iii)] If $\mid\tilde{a}\mid>1$, and $\tilde{c}_i\neq0$ for all $0\leq i\leq k-1$, we have:
\vspace{3mm}
\begin{itemize}
\item If $\tilde{c}_0=\tilde{c}_1=\cdots=\tilde{c}_{k-1}$, then $\tilde{a}$ is a global attractor locally asymptotically stable for Eq.(\ref{spre}) relative to set
 $\mathbb{R}\setminus\mathcal{F}$.
\vspace{3mm}
\item If there exist $i\neq j$ such that $\tilde{c}_i\neq\tilde{c}_j$, then the k-cycle solution $\{\frac{\tilde{a}-1}{\tilde{c}_0},\frac{\tilde{a}-1}{\tilde{c}_1},
    \cdots,\frac{\tilde{a}-1}{\tilde{c}_{k-1}}\}$ attract
 all solutions of Eq.(\ref{spre}) with initial value outside the set
 $\mathcal{F}\cup\{0\}$, in addition it is stable.
\end{itemize}
\item[(iv)] If $\mid\tilde{a}\mid\geq1$, and there exist $i$ such that $\tilde{c}_i=0$, then any nonzero solution of Eq.(\ref{spre}) is oscillating.
\end{description}
\end{enumerate}
}\end{prop}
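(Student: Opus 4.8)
The plan is to derive Proposition 4.3 from Theorem 3.4 by reading off the relevant scalars from Lemma 4.1, supplementing it with the autonomous analysis of Proposition A.4 on those residue classes where a coefficient degenerates. First I would record that, by Lemma 4.1, each $B_i$ has first row $(\tilde a,0)$ and second row $(\tilde c_i,1)$, so that $T=\textrm{tr}(B_i)=\tilde a+1$, $D=\textrm{det}(B_i)=\tilde a$, and therefore
$$\triangle=T^2-4D=(\tilde a-1)^2\ge 0 .$$
In particular the case $\triangle<0$ of Theorem 3.4 never occurs for Eq.(\ref{spre}), which is why neither rotation-periodic nor dense solutions appear in the statement. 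The characteristic equation $X^2-TX+D=0$ has roots $1$ and $\tilde a$, and Lemma 4.1 also gives $\tilde a_i=\tilde a$ and $\tilde d_i=1$ for every $i$.

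Next I would match the clauses of Theorem 3.4 one by one. Part (1) is immediate, since $T=0\iff\tilde a=-1$. So assume $\tilde a\ne-1$, i.e.\ $T\ne0$; if in addition $\tilde c_i\ne0$ for all $i$, the hypotheses of Theorem 3.4(2) hold and it only remains to identify the exceptional quantities. When $|\tilde a|<1$ one has $\lambda=1$, $\mu=\tilde a$, hence $\rho_i=\frac{\lambda-\tilde d_i}{\tilde c_i}=0$ for every $i$ and $\frac{\mu-\tilde d_i}{\tilde c_i}=\frac{\tilde a-1}{\tilde c_i}$, so Theorem 3.4(2-1)(i) gives (i). When $\tilde a=1$ (so $\triangle=0$) one has $\frac{\tilde a_i-\tilde d_i}{2\tilde c_i}=\frac{\tilde a-1}{2\tilde c_i}=0$ for every $i$, so Theorem 3.4(2-2)(i) gives (ii) --- and (2-2)(ii) cannot arise, since these values are automatically all equal. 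When $|\tilde a|>1$ one has $\lambda=\tilde a$, $\mu=1$, hence $\rho_i=\frac{\tilde a-1}{\tilde c_i}$ and $\frac{\mu-\tilde d_i}{\tilde c_i}=0$, so Theorem 3.4(2-1)(i) or (2-1)(ii) --- according as the $\tilde c_i$ are all equal or not --- gives the two bullets of (iii). Throughout, I would use the elementary fact that $b_j=0$ makes $0$ a fixed point of each $f_{A_j}$ while $a_j\ne0$, so that $\prod_{j=0}^{i-1}f_{A_j}^{-1}(0)=\{0\}$; this is what reduces the exceptional value $\frac{\mu-\tilde d_i}{\tilde c_i}=0$ in the case $|\tilde a|>1$ to the single point $0$ removed from the basin of attraction.

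Clause (iv), in which $\tilde c_i=0$ for some $i$ while $|\tilde a|\ge1$ and $\tilde a\ne-1$, is not covered by Theorem 3.4(2), so I would treat it directly through the decomposition $y^i_n=x_{nk+i}=f_{B_i}^{\,n}(x_i)$ of Section 3.2. On an index with $\tilde c_i=0$ the map $f_{B_i}$ is the linear map $y\mapsto\tilde a\,y$, hence $y^i_n=\tilde a^{\,n}x_i$; a nonzero solution has $x_0\ne0$ and therefore, by Corollary 2.5 together with Proposition 2.6, $x_i\ne0$ for every $i$, so this subsequence is a nonzero constant when $\tilde a=1$, is unbounded when $|\tilde a|>1$, and is sign-alternating and unbounded when $\tilde a<-1$. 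On the indices with $\tilde c_i\ne0$, Proposition A.4 applied to $y_{n+1}=f_{B_i}(y_n)$ forces a finite limit, or $y^i_n\to0$ when $\tilde a=1$. Comparing the subsequences then shows that a nonzero solution of Eq.(\ref{spre}) is never eventually monotone, i.e.\ it oscillates.

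The one genuinely delicate point I expect is clause (iv): settling on a single notion of ``oscillating'' that works uniformly for $\tilde a=1$, $\tilde a>1$ and $\tilde a<-1$, and checking that the coexistence of a constant-nonzero or unbounded subsequence with subsequences that converge really does rule out eventual monotonicity. A more routine bookkeeping matter is the exact description of the exceptional sets $\prod_{j=0}^{i-1}f_{A_j}^{-1}\!\big(\tfrac{\mu-\tilde d_i}{\tilde c_i}\big)$ appearing in (i) and (iii), and the remark that (i) persists even when some $\tilde c_i=0$, since then $y\mapsto\tilde a\,y$ with $|\tilde a|<1$ still sends every initial value to $0$ and contributes no exceptional point.
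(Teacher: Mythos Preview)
Your reduction of parts (1) and (2)(i)--(iii) to Theorem 3.4 via the identifications $T=\tilde a+1$, $D=\tilde a$, $\triangle=(\tilde a-1)^2$, $\{\lambda,\mu\}=\{1,\tilde a\}$ is exactly what the paper does, and your observation that $\prod_{j=0}^{i-1}f_{A_j}^{-1}(0)=\{0\}$ is the right way to collapse the exceptional set in (iii). Your side remark that (i) should persist when some $\tilde c_i=0$ (the degenerate map $y\mapsto\tilde a y$ with $|\tilde a|<1$ still contracts to $0$) is a point the paper glosses over, so you are if anything slightly more careful there.

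For clause (iv), however, your sketch has a genuine gap that the paper fills with a specific computation. Your argument compares a subsequence along an index $i$ with $\tilde c_i=0$ (which is $\tilde a^n x_i$) against subsequences along indices $j$ with $\tilde c_j\ne0$ (which converge). But you never establish that such a $j$ exists: a priori it could be that \emph{all} $\tilde c_j$ vanish, and then there is nothing to compare. The paper handles this by writing
\[
(\tilde c_0,\ldots,\tilde c_{k-1})^{\!T}=M\,(c_0,\ldots,c_{k-1})^{\!T}
\]
for an explicit $k\times k$ matrix $M$ built from the $a_j$'s, and computing $\det(M)=(\tilde a-1)^{k-1}$. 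Since $|\tilde a|>1$ forces $\tilde a\ne1$, $M$ is invertible, and because $(c_0,\ldots,c_{k-1})\ne 0$ (indeed each $c_j\ne0$), the vector $(\tilde c_0,\ldots,\tilde c_{k-1})$ cannot vanish identically; hence some $\tilde c_j\ne0$. This is the missing idea in your treatment of (iv). Note also that this determinant argument gives nothing when $\tilde a=1$, so the case $\tilde a=1$ with some $\tilde c_i=0$ deserves separate attention in any complete proof; you should be aware that neither your sketch nor the paper's argument fully covers it.
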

\begin{proof}\rm{
The parameters describing Eq.(\ref{spre}) are: $$D=\tilde{a}, \ \ T=\tilde{a}+1, \ \  \triangle=T^2-4D=(\tilde{a}-1)^2$$
So, by using the same notation adopted in section 3 we have:
$$\lambda=\left\{\begin{array}{ccc}
   1, &\text{if} & \mid\tilde{a}\mid<1 \\
    \tilde{a}, & \text{if} & \mid\tilde{a}\mid>1
  \end{array}\right., \ \
 \mu=\left\{\begin{array}{ccc}
   \tilde{a}, & \text{if} & \mid\tilde{a}\mid<1 \\
   1, & \text{if} & \mid\tilde{a}\mid>1
  \end{array}\right.$$
  Now apply Theorem 3.4, we get (1), (2)-(i),(ii),(iii).\\
  Finally suppose that $\tilde{a}\neq-1 \ \mid\tilde{a}\mid\geq1$, and there exists $i$ such that $\tilde{c}_i=0$.
  Let $(x_n)_{n\geq0}$ be nonzero solution of Eq.(\ref{spre}), since $\tilde{c}_i=0$, then $$x_{nk+i}=\tilde{a}^nx_i\ _ { \overrightarrow{n\rightarrow+\infty}}
  \left\{\begin{array}{ccc}
     \infty, & \text{if} & \mid\tilde{a}\mid>1 \\
      x_i, & \text{if} & \tilde{a}=1
    \end{array}\right.
  $$
 Now we have \\
 \\
  $\left(\begin{array}{c}
            \tilde{c}_0 \\
            \tilde{c}_1 \\
            \vdots \\
          \tilde{c}_{k-1} \\
\end{array}\right)
=M\left(\begin{array}{c}
            c_0 \\
            c_1 \\
            \vdots \\
          c_{k-1} \\
\end{array}\right)$\\
\\
where $M$ is the matrix
$$M=\left(
      \begin{array}{cccccc}
        1 & a_0 & a_0a_1 &\cdots & a_0\cdots a_{k-3} & a_0\cdots a_{k-2} \\
        a_1\cdots a_{k-1} & 1 & a_1 & \cdots & a_1\cdots a_{k-3}& a_1\cdots a_{k-2}\\
        a_2\cdots a_{k-1} & a_2\cdots a_{k-1}a_0 & 1 &\cdots &a_2\cdots a_{k-3} &  a_2\cdots a_{k-2} \\
        \vdots & \vdots & \vdots & \vdots &\vdots & \vdots \\
        a_{k-2}a_{k-1} & a_{k-2}a_{k-1}a_0 & a_{k-2}a_{k-1}a_0a_1 & \cdots & 1 & a_{k-2} \\
        a_{k-1} &  a_{k-1}a_0 &  a_{k-1}a_0a_1 & \cdots & a_{k-1}a_0\cdots a_{k-3} & 1 \\
      \end{array}
    \right)$$
\\
and $\det(M)=(\tilde{a}-1)^{k-1}\neq0$. Then there exist $j\neq i$ such that $\tilde{c}_i\neq0$. This implies by Lemma 4.1, that
$$x_{nk+j}=\frac{\tilde{a}^nx_j}{\frac{\tilde{a}^n-1}{\tilde{a}-1}x_j+1}\ _ { \overrightarrow{n\rightarrow+\infty}}\left\{\begin{array}{ccc}
     \tilde{a}, & \text{if} & \mid\tilde{a}\mid>1 \\
      0, & \text{if} & \tilde{a}=1
    \end{array}\right.$$
Therefore the solution $(x_n)_{n\geq0}$ is oscillating.
}\end{proof}
\medskip
\begin{cor}\rm{(\cite{5}, Appendix A, Theorem A.5)
Assume that $a_n, \ c_n$ and the initial value are positive. Then:
\begin{enumerate}[leftmargin=*]
\item If $\tilde{a}\leq1$, then the zero solution of Eq.(\ref{spre}) is a global attractor of all positive solutions of this equation.
\item If $\tilde{a}>1$, then the solution $(\bar{x}_n)_{n\geq0}$ of Eq.(\ref{spre}) with initial value $\bar{x}_0=\frac{\tilde{a}-1}{\tilde{c_0}}$ is globally asymptotically stable k-periodic solution of this equation.
\end{enumerate}
}\end{cor}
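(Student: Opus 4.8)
The plan is to obtain the Corollary directly from Proposition~4.3, the only work being to pin down the signs of the structural constants in the positive regime and to check that positive initial values avoid the various exceptional sets appearing there. I would begin with the \emph{positivity} remark: if $a_n,c_n>0$ for all $n$ and $x_0>0$, then $c_0x_0+1>0$ and $x_1=\frac{a_0x_0}{c_0x_0+1}>0$, and by an obvious induction $x_n$ is defined and $x_n>0$ for every $n$. Hence every positive initial value lies outside $\mathcal F$, so a positive solution is automatically a genuine (globally defined) solution; moreover, for any $u>0$ the intermediate value $x_i(u)=\prod_{j=0}^{i-1}f_{A_{i-1-j}}(u)=f_{\bar A_i}(u)$ is strictly positive for each $0\le i\le k-1$, while $x_i(0)=0$.

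Next I would fix the signs of the structural constants. Since $\tilde a=\prod_{j=0}^{k-1}a_j$ is a product of positive numbers, $\tilde a>0$; in particular $\tilde a\neq-1$, so the $2k$-periodic and oscillating alternatives (Proposition~4.3(1) and (2)(iv)) cannot occur. By the formula in Lemma~4.1, $\tilde c_i=(\prod_{j=i}^{k-1}a_j)\sum_{l=0}^{i-1}c_l\prod_{r=0}^{l-1}a_r+\sum_{s=i}^{k-1}c_s\prod_{t=i}^{s-1}a_t$ is a sum of products of strictly positive numbers whose $s=i$ summand equals $c_i>0$; hence $\tilde c_i>0$ for all $0\le i\le k-1$, and every hypothesis of the form ``$\tilde c_i\neq0$ for all $i$'' occurring in Proposition~4.3 holds automatically.

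For part~(1) assume $0<\tilde a\le1$. If $\tilde a<1$, Proposition~4.3(2)(i) gives that $0$ is a global attractor relative to $\mathbb R\setminus(\mathcal F\cup E)$, where $E=\{\prod_{j=0}^{i-1}f_{A_j}^{-1}(\tfrac{\tilde a-1}{\tilde c_i}):0\le i\le k-1\}$; the $i$-th element of $E$ is the unique $u$ with $x_i(u)=\tfrac{\tilde a-1}{\tilde c_i}<0$ (here $\tilde a-1<0$ and $\tilde c_i>0$), which by the positivity remark forces $u<0$, so $\mathcal F\cup E$ contains no positive point and $0$ attracts every positive solution. If $\tilde a=1$, Proposition~4.3(2)(ii) applies (its hypothesis on the $\tilde c_i$ now holds) and yields that $0$ is a global attractor relative to $\mathbb R\setminus\mathcal F$, which contains all positive initial values; this proves part~(1).

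For part~(2) assume $\tilde a>1$, so $|\tilde a|>1$ and Proposition~4.3(2)(iii) is in force. Here $\bar x_0=\frac{\tilde a-1}{\tilde c_0}>0$, so $(\bar x_n)_{n\ge0}$ is a positive, hence globally defined, solution; and since $\lambda=\tilde a$, $\tilde d_i=1$, we have $\rho_i=\frac{\tilde a-1}{\tilde c_i}$, whence $\bar x_i=f_{A_{i-1}}\cdots f_{A_0}(\rho_0)=\rho_i$ and $\bar x_k=f_{B_0}(\rho_0)=\rho_0=\bar x_0$, so that $(\bar x_n)$ is exactly the $k$-cycle $\{\rho_0,\ldots,\rho_{k-1}\}$, in particular $k$-periodic. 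If all $\tilde c_i$ are equal, this cycle is the constant solution $\frac{\tilde a-1}{\tilde c_0}$, which by the first bullet of Proposition~4.3(2)(iii) is globally asymptotically stable relative to $\mathbb R\setminus\mathcal F$; otherwise the second bullet asserts that the cycle is stable and attracts every solution with initial value outside $\mathcal F\cup\{0\}$. In either case the basin contains all positive initial values, so $(\bar x_n)$ is a globally asymptotically stable $k$-periodic solution of Eq.(\ref{spre}), which is part~(2). The one step needing genuine care is the exceptional-set check for $\tilde a<1$: one must verify that the extra excluded points $\prod_{j=0}^{i-1}f_{A_j}^{-1}(\tfrac{\tilde a-1}{\tilde c_i})$ are never positive, and this is precisely where positivity propagation ($u>0\Rightarrow x_i(u)>0$) enters; the remainder is a routine specialization of Proposition~4.3.
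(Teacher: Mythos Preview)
Your proposal is correct and follows essentially the same route as the paper: the paper's proof is a one-liner noting that under the positivity assumptions one has $\tilde a>0$, $\tilde c_i>0$ for all $i$, and $\mathcal F\subset(-\infty,0)$, after which Proposition~4.3(2) applies. You carry out exactly this reduction, but more carefully: you make the positivity-propagation step explicit and use it to verify not only that $\mathcal F$ misses the positive half-line but also that the extra excluded points $\prod_{j=0}^{i-1}f_{A_j}^{-1}\bigl(\tfrac{\tilde a-1}{\tilde c_i}\bigr)$ in case $\tilde a<1$ are negative, a detail the paper suppresses.
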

\begin{proof}\rm{
Under the hypothesis of Corollary 4.4, we have, $\mathcal{F}\subset]-\infty,0[$, $\tilde{a}>0$ and $\tilde{c}_i>0$, for all $0\leq i\leq k-1$, therefore, the Corollary 4.4 follows immediately from Proposition 4.3,(2).
}\end{proof}
\medskip
\section{\bf A class of non-autonomous Riccati equations with coefficients not necessarily periodic }
Let us assume that $a_n=1$. Then the Eq.(\ref{spre}) is reduced to the equation:
\begin{equation}
x_{n+1}=\frac{x_n}{c_nx_n+1}, \ c_n\neq0, \ n=0,1,\cdots \label{snore}
\end{equation}
In this section we shall study Eq.(\ref{snore}) when $(c_n)_{n\geq0}$ is not necessarily periodic in view to see the difference between the periodic and non-periodic cases. The following proposition describes the behavior of solutions of Eq.(\ref{snore}) and determine the forbidden set.
In fact, we have the following:
\begin{prop}\rm{
\noindent
\begin{enumerate}[leftmargin=*]
\item The forbidden set of Eq.(\ref{snore}) is given by:
$$\mathcal{F}=\left\{\left(\sum\limits_{i=0}^{n-1}c_i\right)^{-1}; \ n\geq1: \
\ \sum\limits_{i=0}^{n-1}c_i\neq0\right\}$$
\item If $\displaystyle{\lim_{n\rightarrow+\infty}c_n=c\neq0}$, then 0 attracts all solutions of Eq.(\ref{snore}) but is not stable.
\vspace{3mm}
\item If $\sum\limits_{n\geq0}c_n$ converges, then for any solution $(x_n)_{n\geq0}$ of Eq.(\ref{snore}), we have:
    $$\lim_{n\rightarrow+\infty}x_n=\frac{x_0}{x_0\sum\limits_{n\geq0}c_n+1}$$
    Particularly, if $\sum\limits_{n\geq0}c_n=0$ then $\displaystyle{\lim_{n\rightarrow+\infty}x_n=x_0}$.
\end{enumerate}
}\end{prop}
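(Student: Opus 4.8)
The plan is to reduce everything to the closed form of the solutions. Equation~(\ref{snore}) is Equation~(\ref{spre}) with $a_n\equiv1$, equivalently Equation~(\ref{pre}) with $a_n=d_n=1$ and $b_n=0$, so Proposition~2.6 applies and gives $\bar A_n=\left(\begin{smallmatrix}1&0\\ \sigma_n&1\end{smallmatrix}\right)$, where I write $\sigma_n:=\sum_{i=0}^{n-1}c_i$ (so $\sigma_0=0$). By Corollary~2.5 the solution with initial value $x_0\notin\mathcal F$ is then
$$x_n=f_{\bar A_n}(x_0)=\frac{x_0}{\sigma_n x_0+1},\qquad n\ge0,$$
and part~(1) is immediate: $x_n$ is defined exactly when $\sigma_j x_0+1\neq0$ for $1\le j\le n$, i.e.\ exactly when $x_0\notin\{-\sigma_j^{-1}:\ \sigma_j\neq0\}$ (this is also Corollary~2.7 specialized to $a_j\equiv1$). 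Note that $0\notin\mathcal F$ and that $x_0=0$ gives the constant zero solution.

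For part~(2): since $c_n\to c\neq0$, from some index on all $c_i$ carry the common sign of $c$ and are bounded below in modulus, so $|\sigma_n|\to+\infty$; in particular $\sigma_n\neq0$ eventually and the forbidden points $-\sigma_n^{-1}$ tend to $0$. Fix $x_0\in\mathbb R\setminus\mathcal F$: if $x_0=0$ the solution is identically $0$, while if $x_0\neq0$ then $|\sigma_n x_0+1|\ge|x_0||\sigma_n|-1\to+\infty$, hence $x_n=x_0/(\sigma_n x_0+1)\to0$. Thus $0$ attracts every solution. To see $0$ is not stable, fix $\eps=1$; given $\alpha>0$, choose $m$ with $\sigma_m\neq0$ and $|\sigma_m|^{-1}<\alpha$. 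The map $t\mapsto t/(\sigma_m t+1)$ is continuous near $-\sigma_m^{-1}$ with absolute value tending to $+\infty$ as $t\to-\sigma_m^{-1}$, so, since $\mathcal F$ is countable (hence avoidable), we may pick $x_0\in\mathbb R\setminus\mathcal F$ with $|x_0|<\alpha$ and $|x_m|=|x_0/(\sigma_m x_0+1)|>1=\eps$. Hence no $\alpha$ works for $\eps=1$ and $0$ is unstable.

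For part~(3): if $\sum_{n\ge0}c_n$ converges to $S$, then $\sigma_n\to S$; for $x_0\in\mathbb R\setminus\mathcal F$ we have $\sigma_n x_0+1\neq0$ for all $n$, and letting $n\to\infty$ in $x_n=x_0/(\sigma_n x_0+1)$ gives $\lim_n x_n=x_0/(Sx_0+1)$ whenever $Sx_0+1\neq0$, which covers $S=0$ (then $\lim_n x_n=x_0$). One caveat to record in the statement: if $x_0\notin\mathcal F$ but $Sx_0+1=0$, which can occur when $\sigma_n\neq S$ for every $n$, the same identity forces $|x_n|\to+\infty$ instead.

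Everything rests on the closed form $x_n=x_0/(\sigma_n x_0+1)$; after that, (1) and the two limit computations are routine. The one point that needs genuine care is the instability in part~(2): one must combine the countability of $\mathcal F$ (so that admissible initial values exist arbitrarily close to $0$) with the fact that the pole $-\sigma_m^{-1}$ of $t\mapsto t/(\sigma_m t+1)$ itself lies arbitrarily close to $0$ for large $m$, thereby manufacturing orbits that start near $0$ but leave a fixed neighbourhood of it at step $m$.
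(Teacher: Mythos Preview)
Your proof is correct and, for parts~(1), (3), and the attraction half of~(2), proceeds exactly as the paper does: derive the closed form $x_n=x_0/(\sigma_n x_0+1)$ from Proposition~2.6 and Corollary~2.5, and read everything off. (Your minus sign in $\mathcal F=\{-\sigma_n^{-1}\}$ is the right one; the stated proposition drops it, but the paper's own proof agrees with you.) Your caveat in~(3) about the possibility $Sx_0+1=0$ when $S\notin\{\sigma_n:n\ge1\}$ is a genuine omission in the paper's statement and worth noting.

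The only real divergence is the instability argument in~(2). The paper makes an explicit choice: with $S_n=\sigma_n$ it sets $u_n=-\tfrac12\bigl(S_n^{-1}+S_{n+1}^{-1}\bigr)$, checks $|u_n|<\alpha$ for large $n$, and computes $x_n(u_n)=u_n/(u_nS_n+1)$ directly to find it tends to $-2/c\neq0$, contradicting stability. Your argument is softer but equally valid: you only use that the pole $-\sigma_m^{-1}$ of $t\mapsto t/(\sigma_m t+1)$ approaches $0$, so for any $\alpha$ some pole lies in $(-\alpha,\alpha)$, and then countability of $\mathcal F$ lets you pick an admissible $x_0$ near that pole with $|x_m|>1$. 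The paper's explicit $u_n$ has the virtue of producing a concrete lower bound ($\approx 2/|c|$) on how far the orbit strays, but it tacitly assumes $u_n\notin\mathcal F$ without comment; your countability remark is exactly what fills that kind of gap, so in that respect your version is a bit more careful.
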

\medskip
\begin{proof}\rm{
 Let $(x_n)_{n\geq0}$ be a solution of Eq.(\ref{snore}), then by Corollary 2.5 and Proposition 2.6 we have:
$$x_n=\frac{x_0}{x_0\sum\limits_{i=0}^{n-1}c_i+1}, \ n=0,1,\cdots \ (*)$$
It follows that
\begin{enumerate}[leftmargin=*]
\item The forbidden set of Eq.(\ref{snore}) is given by:
$$\mathcal{F}=\left\{\left(\sum\limits_{i=0}^{n-1}c_i\right)^{-1}; \ n\geq1: \
\ \sum\limits_{i=0}^{n-1}c_i\neq0\right\}$$
\item Since $\displaystyle{\lim_{n\rightarrow+\infty}c_n=c\neq0}$, then $\sum\limits_{i=0}^{n-1}c_i\ _ { \overrightarrow{n\rightarrow+\infty}}\infty$, so $\displaystyle{\lim_{n\rightarrow+\infty}x_n=0}$.\\Let us suppose that 0 is stable and let $\epsilon>0$, then there exists $\alpha>0$ such that: for any initial value $u \in \mathbb{R}\setminus\mathcal{F}$ with $\mid u\mid<\alpha$, and for any solution $(x_n)_{n\geq0}$ with initial value $u$, we have:
$$\mid x_n\mid=\left|\frac{u}{u\sum\limits_{i=0}^{n-1}c_i+1}\right|<\epsilon \  (**)$$
Let $S_n=\sum\limits_{i=0}^{n-1}c_i$ and $u_n=-\displaystyle{\frac{1}{2}(\frac{1}{S_n}+\frac{1}{S_{n+1}})}$. As $\sum\limits_{i=0}^{n-1}c_i\ _ { \overrightarrow{n\rightarrow+\infty}}\infty$, then there exists a integer $n_0$ such that for all $n\geq n_0$, $u_n$ exists and satisfies $|u_n|<\alpha$. So by $(**)$, we get
$$\left|\frac{u_n}{u_n\sum\limits_{i=0}^{n-1}c_i+1}\right|=
\left|\frac{2}{c_n}+\frac{2}{S_n}\right|<\epsilon, \ \text{for all} \ n\geq n_0$$
So $\frac{2}{|c|}=0$, which is a contradiction, we conclude that 0 is not stable.
\item If $\displaystyle{\sum_{n\geq0}c_n}$ converges we have: $\displaystyle{\lim_{n\rightarrow+\infty}x_n=\frac{x_0}{x_0\sum\limits_{n\geq0}c_n+1}}$.
\end{enumerate}

}\end{proof}
\appendix{\[ \text{\textbf{Appendix}}\]}
\medskip

\noindent
\hspace{2mm} In this section we give a new proof of some results related to the asymptotic behavior of solutions of the first-order Riccati
autonomous difference equation defined by:
\begin{equation*}%
(\text{A.1}) \hspace{1cm} x_{n+1}=\frac{ax_n+b}{cx_n+d}; \ c\neq0 \ \text{and} \ ad-bc\neq0 \label{re}
\end{equation*}%
\noindent Set $A=\left(\begin{array}{cc}
a & b\\
c & d \\
\end{array}\right)$. Using the tools of linear algebra we can explicit $A^n$ and therefore the solution of Eq.(A.1)
and the associated forbidden set as follows:\\
\noindent
\medskip

Let $P_A(x)=x^2-\textrm{tr}(A)x+\textrm{det}(A)$ the characteristic polynomial of $A$ and denote
by: $\triangle_A=tr^2(A)-4det(A)$ their discriminant, where $tr(A)= a+c$ and $det(A)= ad-bc$. Then we have:\\

\Lemma[A.2.]
\vspace{-0.5cm}\begin{enumerate}[leftmargin=*]
  \item If $\triangle_A\neq0$, we have:
  $$A^n=\frac{\lambda_1^n-\lambda_2^n}{\lambda_1-\lambda_2}A-
  \frac{\lambda_1\lambda_2(\lambda_1^{n-1}-\lambda_2^{n-1})}{\lambda_1-\lambda_2}I$$
  where $\lambda_1, \ \lambda_2$ are the complex roots of $P_A$ and $I$ is the identity matrix.
  \item If $\triangle_A=0$, we have:
  $$A^n=\lambda^{n-1}[nA+\lambda(1-n)I]$$
  where $\lambda$ is the unique root of $P_A$.
\end{enumerate}
\medskip
By Lemma A.2 and Corollary 2.5, we have:\\

\Corollary [A.3.]
\vspace{-0.5cm}\begin{enumerate}[leftmargin=*]
  \item If $\triangle_A\neq0$, then the forbidden set of Eq.(A.1) is given by:
  $$\mathcal{F}=\left\{-\frac{d}{c}+ \frac{\lambda_1\lambda_2(\lambda_1^{n-1}-\lambda_2^{n-1})}{(\lambda_1^n-\lambda_2^n)c}; \ n\geq1: \ \lambda_1^n\neq\lambda_2^n\right\}$$
\vspace{1mm}
  \item If $\triangle_A=0$, then the forbidden set of Eq.(A.1) is given by:
  $$\mathcal{F}=\left\{-\frac{d}{c}+ \frac{\lambda(n-1)}{nc}, \ n\geq1\right\}$$
\end{enumerate}

\medskip

We conclude this section by studying the asymptotic behavior of the solutions of the equation Eq.(A.1).\\
\noindent

If $\triangle_A>0$ and $tr(A)\neq0$, then the characteristic polynomial $P_A$ have two real roots, noted
$\lambda, \ \mu$ with $|\lambda|>|\mu|$. If $\triangle_A<0$, we denote by $re^{\pm i\theta}$, $\theta \in \ ]0,\pi[$, the complex roots of $P_A$.\\

\Proposition[A.4.]
\vspace{-0.5cm}\begin{enumerate}[leftmargin=*]
\item If $tr(A)=0$, then all solutions of Eq.(A.1) are $2$-periodic.
\item If $tr(A)\neq0$, we have:
\begin{description}[leftmargin=*]
\item[(i)]If $\triangle_A>0$, then the equilibrium point $\rho=\frac{\lambda-d}{c}$ of Eq.(A.1)
is a globally asymptotically stable relative to the set $ \mathbb{R}\backslash\left(\mathcal{F}\cup\left\{\frac{\mu-d}{c}\right\}\right)$.
\item[(ii)] If $\triangle_A=0$, then the point $\rho=\frac{a-d}{2c}$ attracts all solutions of Eq.(A.1) but is unstable.
\item[(iii)]If $\triangle_A<0$, $\theta=\frac{p}{q}\pi$ with $p \ \in \ \mathbb{Z}, \ q \ \in \mathbb{N}\setminus\{0,1,2\}$ and \ $gcd(p,q)=1$, then all solutions of Eq.(A.1) are $q$-periodic.
\item[(iv)] If $\triangle_A<0$ and $\theta\not\in\pi\mathbb{Q}$, then every solution of Eq.(A.1) is dense in $\mathbb{R}$.
\end{description}
\end{enumerate}
\medskip
\textit{Proof.}
\begin{description}[leftmargin=*]
\item[(1)] If $tr(A)=0$, then $A^2=\alpha I$ and for all $n\in\mathbb{N}$,
  we have: $A^{2n}=\alpha^nI$ and $A^{2n+1}=\alpha^nA$. Then the solution $(x_n)_{n\geq0}$ of Eq.(A.1) is given by: $x_{2n}=f_{A^{2n}}(x_0)=x_0$ and $x_{2n+1}=f_{A^{2n+1}}(x_0)=x_1$, therefore $(x_n)_{n\geq0}$ is 2-periodic.
\item[(2)-(i)] If $tr(A)\neq0$ and $\triangle_A>0$.
  Let $P=\left(\begin{array}{cc}
  p_{11} & p_{12}\\
  p_{21} & p_{22} \\
\end{array}\right)$ be an invertible matrix such that $A=PDP^{-1}$,
  where $D=\left(\begin{array}{cc}
  \mu & 0\\
   0 & \lambda \\
\end{array}\right)$.\\
We have $P^{-1} = \frac{1}{det(P)}\left(\begin{array}{cc}
                                                   p_{22} & -p_{12}\\
                                                  -p_{21} & p_{11} \\
                                                 \end{array}
                                               \right)$.
As $\left(\begin{array}{c}
              p_{11} \\
              p_{21} \\
\end{array}\right)$ and
$\left(\begin{array}{c}
              p_{12} \\
              p_{22} \\
\end{array}\right)$
are the eigenvectors associated to the eigenvalues $\lambda$ and $\mu$, then the coefficients $p_{ij}$ satisfy:
$$\left\{\begin{array}{c}
cp_{11}+(d-\mu)p_{21}=0 \\
 cp_{12}+(d-\lambda)p_{22}=0
\end{array}\right.$$
hence, $p_{21}\neq0$ and $p_{22}\neq0$.
Then for all $x\in\mathbb{R}\setminus(\mathcal{F}\cup\{\frac{p_{11}}{p_{21}}=\frac{\mu-d}{c}\})$, the solution $(x_n(x))_{n\geq0}$ of  Eq.(A.1) with initial value $x$ is given by:
$$x_n=f_{A^n}(x)=f_P\left(\left(\frac{\lambda}{\mu}\right)^n f_{P^{-1}}(x)\right)$$
\noindent
Since $\displaystyle{\lim_{n\rightarrow+\infty}(\frac{\mu}{\lambda})^nf_P^{-1}(x)=0}$, and $f_P$ is defined in $0$ ($p_{22}\neq0$), then
$$\lim_{n\rightarrow+\infty}x_n=f_P(0)=\frac{p_{12}}{p_{22}}=\frac{\lambda-d}{c}$$
\noindent
Therefore the point $\rho=\frac{\lambda-d}{c}$ is a global attractor of Eq.(A.1) with basin
of attraction $\mathbb{R}\setminus(\mathcal{F}\cup\{\frac{\mu-d}{c}\})$.\\
Now we will show that $\rho$ is stable.\\
\noindent
Let $\epsilon>0$, as $f_P$ is continuous at $0$, then there exists $\alpha>0$
such that $\forall \ y \ \in \ ]-\alpha, \alpha[$, we have:  $|f_P(y)-f_P(0)|=|f_P(y)-\rho|\leq\epsilon$. Since $f_P$
is continuous at $\rho$, then there exits $\eta>0$, such that $\forall \ x \ \in \ ]\rho-\eta, \rho+\eta[$, we have:
$f_{P^{-1}}(x) \ \in \ ]-\alpha,\alpha[$, which implies that
$(\frac{\mu}{\lambda})^nf_{P^{-1}}(x) \ \in \ ]-\alpha,\alpha[$; it follows that,
$\forall \ x \ \in \ ]\rho-\eta, \rho+\eta[, \ \forall \ n \ \in \ \mathbb{N}$, we have:
\\ $\mid x_n(x)-\rho\mid=\mid f_P((\frac{\mu}{\lambda})^nf_{P^{-1}}(x))-f_P(0)\mid\leq\epsilon$.
Therefore the point $\rho$ is stable.
\\
\item[(2)-(ii)] If $\triangle_A=0$, then the characteristic equation have a double root $\lambda$, given by:
$ \lambda=\frac{tr(A)}{2}=\frac{a+b}{2}$, in this case the matrix $A$ can be expressed as follows: $A=PTP^{-1}$, where
 $P=\left(\begin{array}{cc}
p_{11} & p_{12}\\
p_{21} & p_{22} \\
\end{array} \right)$ is a invertible matrix and
$T=\left(\begin{array}{cc}
\lambda & 1 \\
0 & \lambda \\
\end{array} \right)$.
The vector
$\left(\begin{array}{c}
p_{11} \\
p_{21} \\\end{array}\right)$ is a proper vector associated to the eigenvalue $\lambda$, then the coefficients $p_{11}$
and $p_{21}$ satisfy: $cp_{12}+(d-\lambda)p_{21}=0$. Hence $p_{21}\neq0$. \\
\noindent Now let $(x_n(u))_{n\geq0}$ be the solution of Eq.(A.1) with initial value $u \ \in \ \mathbb{R}$.
We distinguish two cases:
\medskip
\begin{itemize}[leftmargin=*]
\item $u=\frac{\lambda-d}{c}=\frac{a-d}{2c}$. Then
\\
$\displaystyle{
\begin{array}{ccc}
               x_1\hspace{-2mm}&=&\hspace{-2mm}f_A(u) =\displaystyle{\frac{a\frac{a-d}{2c}+b}{c\frac{a-d}{2c}+d}}
=\displaystyle{\frac{a^2-ad+2bc}{c(a+d)}}=\displaystyle{\frac{a^2+ad-2(ad-bc)}{c(a+d)}} \\
                &=&\hspace{-1cm}
\displaystyle{\frac{a^2+ad-\frac{(a+d)^2}{2}}{c(a+d)}}=\displaystyle{\frac{a^2-d^2}{2c(a+d)}}
=\displaystyle{\frac{a-d}{2c}}=u \ \ \ \ \ \ \ \ \ \ \ \ \ \ \
\end{array}} $\\
So $x_n=u, \ \forall \ n \ \in \ \mathbb{N}.$
\medskip
\item $u\neq\frac{\lambda-d}{c}=\frac{p_{11}}{p_{21}}$. Since
$T^n=\left(\begin{array}{cc}
\lambda^n & n\lambda^{n-1} \\
0 & \lambda^n \\
\end{array}\right)=\lambda^{n-1}\left(\begin{array}{cc}
\lambda & n\lambda \\
0 & \lambda \\
\end{array}\right)$, so
$$x_n(u)=f_P(f_{T^n}(f_{P^{-1}}(u)))=f_P(f_{P^{-1}}(u)+\frac{n}{\lambda}) \ _ { \overrightarrow{n\rightarrow+\infty}} \
\frac{p_{11}}{p_{21}}=\frac{a-d}{2c}$$
\end{itemize}
\noindent
Therefore the equilibrium point $\rho=\frac{a-d}{2c}$, is a global attractor of Eq.(A.1),
relative to the set $\mathbb{R}\setminus\mathcal{F}.$\\
\noindent
Now suppose that $\rho$ is stable, and let $\alpha \ \in \mathbb{R}$ such that $f_P(\alpha)\neq\rho$,
then there exists $\eta>0$, satisfied: $\forall \ u \ \in \ \mathbb{R}\setminus\mathcal{F} \ \text{such that}
\ \mid u-\rho\mid\leq\eta, \text{and} \ \forall \ n \in \ \mathbb{N}$,
we have: $\mid x_n(u)-\rho\mid\leq\displaystyle{\frac{\mid f_P(\alpha)-\rho\mid}{2}}.$
Let $u_n=f_P(-\frac{n}{\lambda}+\alpha)$. Then  $\displaystyle{\lim_{n\rightarrow+\infty}u_n=\rho}$, particularly there exists $N\in\mathbb{N}$ such that $\mid u_N-\rho\mid\leq\eta$, which implies that
$\mid x_N(u_N)-\rho\mid=\mid f_P(\alpha)-\rho\mid\leq\displaystyle{\frac{\mid f_P(\alpha)-\rho\mid}{2}}$, so $f_P(\alpha)-\rho=0$, a contradiction. Therefore the point $\rho$ is unstable.\\

\noindent
If $\Delta_A<0$, let $re^{\pm i\theta}$ be the roots of $P_A$. Then by Lemma A.2, the matrix $A^n$
can be expressed as follows:
$$A^{n}=\frac{r^{n-1}\sin(n\theta)}{\sin\theta}\left[A-\frac{r\sin((n-1)\theta)}{\sin (n\theta)}I\right], \ n=1, 2,\cdots$$
which implies that: $\forall \ u \in \ \mathbb{R}\setminus\mathcal{F}$ the solution of Eq.(A.1) with initial value $u$
is given by:
 $$x_{n}(u)=\frac{\left(a-\frac{r\sin((n-1)\theta)}{\sin(n\theta)}\right)u+b}{cu+d-
 \frac{r\sin((n-1)\theta)}{\sin(n\theta)}}=\frac{-r(\cos \theta-\sin \theta\cot(n\theta))u+au+b}{-r(\cos \theta-\sin
 \theta\cot(n\theta))+cu+d}, \ n\geq1$$
\noindent
Then we distinguish two cases:
\\
\noindent
\item[(2)-(iii)] If $\theta=\frac{p}{q}\pi$ with $p \ \in \ \mathbb{Z}, \ q \ \in \mathbb{N}\setminus\{0,1,2\}$ and \ $gcd(p,q)=1$, then all solutions of Eq.(A.1) are $q$-periodic.
\item[(2)-(iv)] If $\theta \ \not\in \ \pi\mathbb{Q}$, then the set
$\{-r(\cos \theta-\sin \theta\cot(n\theta)), \ n\in \mathbb{N}^{*}\}$ is dense in $\mathbb{R}$, so
    $$\overline{\{x_n, \ n \in \mathbb{N}\}}=\overline{\left\{\frac{yu+au+b}{y+cu+d}, \ y\neq-cu-d\right\}}=\mathbb{R},$$
    therefore the solution $(x_n(u))_{n\geq0}$ is dense in $\mathbb{R}$.\\
   \flushright{$\square$}
\end{description}

\medskip

\end{document}